\documentclass[amsart]{article}
\hfuzz=500pt
\usepackage{mitcommands}
\usepackage{tikz}
\usepackage{tikz-cd}
\newcommand{\uX}{\underline{X}}
\newcommand{\uU}{\underline{U}}
\newcommand{\uV}{\underline{V}}
\newcommand{\uW}{\underline{W}}
\newcommand{\uY}{\underline{Y}}
\newcommand{\uG}{\underline{G}}
\newcommand{\uK}{\underline{K}}
\newcommand{\Det}{\operatorname{Det}}
\newcommand{\sDet}{\operatorname{sDet}}
\newcommand{\Ber}{\operatorname{Ber}}
\newcommand{\g}{\mathfrak{g}}
\newcommand{\glie}{\mathfrak{g}}
\newcommand{\tlie}{\mathfrak{t}}

\newcommand{\Loc}{\operatorname{Loc}}
\newcommand{\Pf}{\mathrm{Pf}}
\newcommand{\GQ}{{G_Q}}
\newcommand{\uD}{\underline{D}}
\newtheorem*{clemma}{Lemma}

\date{\today}
\author{Vera Serganova and Dmitry Vaintrob}

\title{Localization for CS manifolds and volume of homogeneous superspaces}

\begin{document}

\maketitle
\abstract{We prove the Schwarz-Zaboronsky localization theorem for CS manifolds and use this to give a volume calculation for homogeneous superspaces for super-Lie groups that lack a real form.}

\newpage
\tableofcontents
\newpage

\section{Introduction}

In the paper \cite{schwarz-zaboronsky}, Schwarz and Zaboronsky prove a powerful localization result in supergeometry which in particular implies the Duistermaat-Heckman formula for hamiltonian actions. Namely, they show that if $X$ is a super-manifold with an odd vector field $Q$, such that $Q^2$ has a certain compactness property and $Q$ has well-behaved fixed points, then any $Q$-equivariant Berezinian integral localizes to an integral over the fixed point submanifold $X^Q.$

Their proof is in the context of (real) super-manifolds; in this paper we re-interpret it slightly in a way that applies to CS super-manifolds in the case of isolated fixed points. 

We apply this theorem to the representation theory of quasireductive algebraic
supergroups, i.e., supergroups whose reduced group is reductive. In
\cite{serganova-sherman} the first author and A. Sherman introduced
the notion of a splitting subgroup. If $K\subset G$ is a pair of
quasireductive groups then $K$  is splitting in $G$ if the trivial
$G$-submodule $\mathbb C\subset \operatorname{Ind}^G_K\mathbb C$
splits as a $G$-module. The splitting condition is equivalent to
relative reductivity: any short exact sequence of $G$-modules splits
if and only if it splits over $K$. The analogous notion is trivial in the theory of compact Lie groups, 
where any inclusion $K\subset G$ is splitting (since the representation theory is semisimple),
but has an analogue in modular representation theory. Namely, working 
over a base field of characteristic $p$, it turns out that an inclusion of finite groups 
$K\subset G$ is splitting if and only if $K$ contain a
Sylow $p$-subgroup of $G$. Many properties of representations of $G$ can be deduced from
those of $K$. Splitting subgroups should be useful for giving a super-Lie group generalisation of 
the Green correspondence theorem from modular representation
theory of finite groups. They are also important in the theory of support
varieties in the super Lie group setting.

In \cite{serganova-sherman} small splitting subgroups
are constructed for $G=GL(m|n)$, $G=Q(n)$ and all Kac-Moody supergroups
of defect $1$. For a general Kac-Moody supergroup $G$, the authors of that paper define a
defect subgroup $D$ of $G$ isomorphic to a direct product of several
copies of $SL(1|1)$ and conjecture that $D$ is minimal splitting. They were
able to prove that $D$ is splitting for $GL(m|n)$ by passing to the
compact real form $X$ of the homogeneous superspace $G/K$ where $K$ is the connected
component of the centralizer of $\uD$. After they show that $X$ has
nonzero volume the result follows by classical unitary trick. For
other Kac-Moody supergroups they encounter an obstacle, as $G/K$ does not
have a compact real form. In the present paper we are able to obtain a simple uniform proof
of the fact that $D$ is splitting for all Kac-Moody supergroups
$G$ using the CS structure on $G/K$ and the new CS localization formula.
A nice corollary of this fact is a "projectivity criterion" for all Kac-Moody
supergroups (see \cite{GHSS} for the history of this notion). Using the same method we
obtain small splitting subgroups of periplectic groups. This fully resolves the question of 
constructing small splitting subgroups (i.e., splitting subgroups whose only simple 
subfactors are abelian supergroups and $SL_2$) for all simple and almost simple 
quasireductive groups $G$ which appear in the Kac classification, \cite{Kac}.




\subsection{Notation and conventions}
In this paper we work with $C^\infty$ CS manifolds, i.e., ringed spaces of $\cc$-super-algebras which are locally isomorphic to $\rr^n$ with ring of functions $C^\infty_\cc(\rr^{m\mid n}).$ All ringed spaces and all instances of the sheaf $C^\infty$ will by convention be taken with complex coefficients. Given a CS manifold $X$, write $\uX$ for the underlying topological space and $X^{red}$ for the purely even supermanifold $\big(\uX, (C^\infty_X)^{red}\big)$ with reduced sheaf of functions, which is canonically isomorphic as a ringed space to the sheaf of complex-valued $C^\infty$ functions on the underlying manifold $\uX.$ 

All algebra in this paper is super, so ``derivation'' means super-derivation, ``commutator'' means super-commutator, etc. All manifolds are super-ringed spaces with a suitable local model.

We write \emph{CS vector space} for the notion of a complex supervector space $V = V_0 \oplus \Pi V_1$ with a choice of real structure $(V_0)_\rr\subset V_0$ on the even part. Associated to such a space, we have a ``linear'' CS manifold $V_{CS}$ with underlying space $(V_0)_\rr$ and sheaf of functions
$$C^\infty(V_{CS}) : = C^\infty((V_0)_\rr)\otimes S^*\Pi(V_1^*).$$ It is clear that any (even) linear map of CS spaces $L:V\to W$ which takes $(V_0)_\rr$ to $(W_0)_\rr$ induces a map $L_{CS}:V_{CS}\to W_{CS}$ in a functorial way. We say that an \emph{orientation} on a CS vector space $V$ is a choice of orientation on the real vector space $(V_0)_\rr,$ equivalently a choice of orientation on the CS manifold $V_{CS}.$

A vector field on a CS supermanifold is a derivation of the sheaf of functions, equivalently a section of the sheaf $T_X$ (defined as usual, as the normal bundle of the diagonal in $X^2$). The reduced part (a.k.a. ``body'') of a vector field $\xi$ is the restriction $\xi\mid_{X^{red}},$ a section of $$T^{red}X:=TX\mid_{X^{red}}.$$ Note that we have canonically $TX\mid_{X^{red}}\cong T(X^{red})\oplus \Pi N_{X/X^{red}},$ where $N_{X/X^{red}}$ is the normal bundle of odd directions. In particular, note that the complex bundle $T_0^{red}: = T^{red}(X)_0$ is isomorphic (as a bundle over $X^{red}$) to the tangent bundle $T(X^{red})$ of the even manifold, and in particular has canonical real structure $T^{red}_{0,\rr}\subset T^{red}_0$ inherited from the canonical real structure on the even manifold $X^{red}.$ The odd part $T^{red}_1\cong N_{X/X^{red}}$ (a bundle on $X^{red}$) will not in general have canonical real structure, and indeed can have obstructions to any choice of real structure (e.g. a nonzero first chern class). Note that an even vector field $\xi$ can be (locally) integrated to positive time if and only if $\xi^{red}$ is real, and this gives an alternative intrinsic characterization of real structure on $T^{red}_0.$

Given a vector field $Q$ (either even or odd), we define the invariant vanishing ideal sheaf $$I_Q(X): = C^\infty\cdot Q(C^\infty)$$ (following the algebraic geometry definition). It is obvious that if $\xi$ is a vector field (either even or odd), then the restriction $I_Q(X)\mid X^{red}$ is precisely the ideal of vanishing of $\xi^{red}.$

Given a super-vector space $V$, respectively, a super-line bundle $V$ over a base CS manifold $X$, its (super-)determinant $\sDet(V)$ is understood in the super-sense as a $(1,0)$-dimensional space, resp., as a line bundle over $X$ (so if $V$ is a vector space, resp., a super-vector bundle over an even base, we have canonically $\sDet(V) = \Det(V_0)\otimes \Det(V_1)^{-1}$). We view $\Det$ as a functor on the category of super-vector vector spaces with isomorhpisms, resp., on the category of super-bundles with isomorhpism. In particular, given an automorphism $V\to V$ of a vector space its determinant give an automorphism $\t{Aut}(\sDet(V))$ of a one-dimensional space, which is canonically identified with $\cc^*.$ We define the Berezinian line bundle $\Ber_X: = \sDet(T^*_X).$ Sections of a Berezinian on an oriented CS manifold can be integrated in the same way (and using the same formulas) as on a (real) supermanifold: see e.g. \cite{witten}.

\subsection{The CS localization theorem}
In the Schwarz-Zaboronsky paper \cite{schwarz-zaboronsky}, they prove a localization result for a manifold with a choice of vector field $Q$ that has a so-called \emph{compactness} condition. This condition is equivalent to the existence of a certain compact super-Lie group $\GQ$ which acts on $X,$ such that $Q$ is the vector field associated to the action of some odd Lie algebra element $Q_{lie}\in \glie_1$ for $\glie$ the Lie algebra associated to $\GQ$. (Note that in the sequel, we will abuse notation and denote both the lie algebra element $Q_{lie}\in \glie_1$ and the vector field $Q$ given by its action by the same letter.) In the CS case we will also work with a group $\GQ$ with a distinguished odd Lie algebra element, which we consisder fixed throughout the paper. 
\begin{defi}
A CS $Q$-group is a \emph{compact} CS super-Lie group $\GQ$ with Lie algebra $\g$ and a choice of odd element $Q\in \g_1,$ such that $\g_0$ is commutative and central and $\g_1$ is one-dimensional and generated by $Q.$ 
\end{defi}
Note that this implies that $\GQ^{red}$ is a real torus, and we will write by convention $$T : = \GQ^{red}.$$

It is clear that a $Q$-group $(\GQ,Q)$ is determined up to isomorphism by the torus $T: = \GQ^{red}$ and the element $Q^2\in \mathfrak{t}_\cc$ in the \emph{complex} Lie algebra of $T.$ 

Given a $Q$-group $\GQ$, we will be studying $\GQ$-equivariant manifolds $X$, with additional $\GQ$-equivariant (or sometimes, only $T$-equivariant) structures.

We will prove a localization theorem under an additional two conditions on the $\GQ$-action, namely:
\begin{defi}
A (reduced) fixed point $x\in \uX^Q$ in a $Q$-manifold is nondegenerate if, for every $Q$-fixed point $x\in \uX^Q,$ the action of $Q$ on the tangent bundle $T_xX$ is an odd automorphism.
\end{defi}
Note that nondegeneracy of a fixed point $x\in \uX$ means that $x$ is a ``scheme-theoretically'' isolated fixed point, i.e., that in an open neighborhood of $x$ we have an equality of sheaves of ideals $I_Q = m_x$ between $I_Q$ and the maximal ideal at the point. Suppose $\GQ$ is a $Q$-group and $X$ is a $\GQ$-manifold with discrete $Q$-fixed points which are nondegenerate. Let $i:X^Q\to X$ be the embedding of the fixed points. In this context we prove the following result.



 \begin{thm}[Localization]\label{thm:main}
 There exists an isomorphism of line bundles on $X^Q$ 
   \[\Loc: i^*\Ber_X\to C_{X^Q}, \] such that,
   for any compactly supported and $Q$-equivariant Berezinian form
   $\omega\in \Gamma(\uX, \Ber_X),$ we have the equality
   \[\int_X\omega = \sum_{X^Q} \Loc\omega. \]
 \end{thm}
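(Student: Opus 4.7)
The plan is to follow the cohomological strategy of Schwarz-Zaboronsky, adapted to the CS setting by carefully justifying a division-by-$Q$ argument away from the fixed locus. The fundamental invariance is Stokes' theorem for Berezinians on an oriented CS manifold: any compactly supported Berezinian density of the form $Q(\eta)$ (i.e., the image under the Lie derivative $L_Q$) integrates to zero. Consequently, for compactly supported $\omega$ with $L_Q\omega = 0$, the integral $\int_X\omega$ depends only on the $Q$-cohomology class of $\omega$, and the theorem reduces to exhibiting, for each $p \in X^Q$, a representative of $[\omega]$ concentrated near $p$ and then computing the resulting local contribution.

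First I would linearize the $\GQ$-action near each $p\in X^Q$: a Bochner-type averaging over the compact torus $T = \GQ^{red}$, combined with a finite Taylor expansion in the odd generator $Q$ (harmless because $\g_1$ is one-dimensional), produces $\GQ$-equivariant local coordinates on a neighborhood $U_p$ in which $\GQ$ acts linearly and matches its action on $T_pX$. Using this linearization I would define the isomorphism $\Loc\mid_p:\sDet(T^*_pX)\to\cc$ by the super-Fresnel formula associated to the Berezinian Gaussian integral of the linear odd operator $Q$ on $T_pX_{CS}$; nondegeneracy guarantees invertibility of the resulting denominator, so $\Loc$ is indeed an isomorphism of line bundles. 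Next I would construct a globally defined $\GQ$-invariant Berezinian density $\eta$ on $X\setminus X^Q$ such that $Q(\eta) = (1-\psi)\omega$, where $\psi$ is a cutoff supported in $\bigsqcup_p U_p$ equal to $1$ near each fixed point. The construction combines a $\GQ$-invariant moment-like odd function $\alpha$ whose $Q(\alpha)$ is nowhere-vanishing off $X^Q$ (obtained by averaging a generic local choice over $T$, which is legal because nondegeneracy gives $I_Q = C^\infty$ off the fixed locus), with division of $\omega$ by $Q(\alpha)$ on the complement of the $U_p$. Integration then leaves only the $\psi\omega$ part, which splits as a sum of local integrals; by the linearization each of these reduces to the model super-Gaussian used to define $\Loc$ at the corresponding fixed point.

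The main obstacle specific to the CS setting is verifying the global existence of the primitive $\eta$ and the convergence of the model Gaussian without the real-analytic tools of \cite{schwarz-zaboronsky}. In the real case one picks a $\GQ$-invariant Riemannian metric and uses it both to construct $\eta$ canonically (as a contraction of $\omega$ with a metric-dual of $Q$) and to bound the integrand uniformly; for CS manifolds no such metric exists, since the odd directions $T^{red}_1$ in general admit no real structure at all and the eigenvalues of $Q^2$ on the odd part can be genuinely complex. The resolution is to use compactness of $T$ on the real even directions (which forces the real quadratic form in the model Gaussian to have positive-definite real part, so the even integral converges absolutely) together with the purely algebraic nature of Berezinian integration along the odd directions (automatic convergence in finite Taylor degree). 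Combined with the compact-support hypothesis on $\omega$, this is enough to justify the deformation globally — and this step, rather than any local computation, is where the CS refinement of Schwarz-Zaboronsky requires genuine work.
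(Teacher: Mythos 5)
Your high-level scaffolding — Stokes for Berezinians, equivariant linearization near each fixed point, an acyclicity/division argument off $X^Q$, and a cutoff to concentrate near the fixed locus — tracks the skeleton of the paper's proof (Lemma~\ref{lm:acyc}, Corollary~\ref{cor:reduction_to_loc}, Lemma~\ref{lm:local_linearity}). But the step where you actually \emph{compute} the local contribution has a genuine gap, and the paper's mechanism for that step is quite different from what you propose.

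You claim that compactness of $T=\GQ^{red}$ ``forces the real quadratic form in the model Gaussian to have positive-definite real part,'' so the super-Fresnel integral defining $\Loc\mid_p$ converges. This does not follow. Compactness of $T$ controls the \emph{real} Lie algebra $\tlie_\rr$, but $Q^2$ lives in the \emph{complexified} Lie algebra $\tlie_\cc$ and is unconstrained by it. In the $(2,2)$-dimensional model $W_\chi$ of Section~\ref{sec:2-dim_lin}, $Q^2$ acts on $W_0$ with eigenvalues $\pm i\lambda$ where $\lambda = \chi(Q^2)\in\cc^*$ is an arbitrary nonzero complex scalar; the resulting quadratic form appearing in the regulator $e^{-tQ\alpha}$ need not have positive real part, may be purely oscillatory, or may blow up, depending on $\arg\lambda$. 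This is exactly the new difficulty of the CS setting that your proposal identifies but does not actually resolve: the real-structure on the even body is $T$-invariant, but $Q^2$ does not preserve it. (It is true that, using the structure theorem for nondegenerate CS $\GQ$-representations, one can hand-tune the equivariant ``moment'' $\alpha$ eigenspace by eigenspace so that the even part of $Q\alpha$ becomes $\sum_i z_i\bar z_i$, but that argument goes through Lemma~\ref{sdf}, not through compactness of $T$, and you do not carry it out.)

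The paper avoids the convergence problem entirely by never taking a regulated limit. Instead of producing a primitive $\eta$ via a Gaussian deformation, it exhibits an \emph{exact} distributional primitive in the linear model: with coordinates $z,\bar z,\theta,\bar\theta$ on $W_\chi^*$ the odd generalized function $\sigma = \theta/(i\lambda z)$ satisfies
\[
Q\sigma = 1 - \frac{2\pi}{\lambda}\,\frac{\delta_{\mathbf 0}}{\{dz\,d\bar z\mid d\theta\,d\bar\theta\}},
\]
using the harmonic-analysis identity $\bar\partial(1/z) = -2\pi i\,\delta_0/(dz\,d\bar z)$ (Lemma~\ref{lm:interp_eq}). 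Pairing with any $\GQ$-equivariant compactly supported $\omega$ and integrating then gives the localization formula directly, and the higher-dimensional case follows by pulling this identity back along the projection to the first $(2,2)$ factor and inducting. No Gaussian, no convergence of an oscillatory limit, no need for the eigenvalues of $Q^2$ to cooperate. The paper even flags the contrast you are running into: the Schwarz--Zaboronsky regularization would produce a primitive with singularities like $1/(x^2+y^2)$, whereas the paper's primitive has singularities like $1/(x+iy)$, and only the latter is manifestly well-defined when $\lambda$ is not real. To repair your argument you would need to replace the ``compactness forces convergence'' claim with an explicit eigenspace-by-eigenspace choice of $\alpha$, or, more cleanly, switch to the distributional primitive as the paper does.
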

 Note that since $X^Q$ is discrete, line bundles on it are simply collections of one-dimensional vector spaces indexed by fixed points (points of $X^Q$). The map $\Loc$ is thus equivalent to choosing a parametrization of the Berezinian fiber $\Ber_X\mid_{x}$ with $\cc$ for each point $x\in X^Q.$ We give an explicit formula for this identification in definition \ref{def:linloc}, and deduce a precise (and computable) version localization formula in Theorem \ref{thm:main_precise}.

This is a generalization to the CS context of one of the main results (Lemma 2) in \cite{schwarz-zaboronsky}. Note that the main result of \cite{schwarz-zaboronsky}, their Lemma 3, implies that an analogous localization result also holds (in the real case) when the fixed points are reduced and nondegenerate but not necessarily discrete. We expect such a generalization to hold also in the CS case, though we do not prove it here.

Along the way we also prove the following \emph{$Q$-acycicity} lemma.
\begin{lm}[$Q$-acyclicity]\label{lm:qex}\label{lm:acyc}
Suppose $X$ is a $\GQ$-equivariant CS manifold and $Q$ is everywhere nonvanishing. Then there exists an odd function $\beta \in C^\infty(X)_1$ such that $Q\beta = 1.$
\end{lm}
This is a CS version of a secondary result, Theorem $1$, of \cite{schwarz-zaboronsky}. Note that this lemma implies the main theorem in the case where $Q$ has no fixed points: indeed, then the right hand side of the localization formula is an empty sum, hence $0$, whereas for any $\omega$ with $Q\omega = 0$ we can write the left hand integral $\int\omega = \int Q(\beta\omega),$ which is the integral of a total derivative of a compactly supported Berezinian --- hence equal to zero.

\subsection{Comparison with Schwarz-Zaboronsky}
Recall that a (real) supermanifold is the same thing as a CS manifold with a choice of real structure $C^\infty_\rr\subset C^\infty$ on functions. Thus any suitably functorial result about CS manifolds implies a corresponding result for real supermanifolds. We can thus define a \emph{real $Q$-group} $\GQ$ to be a real Lie group with a choice of real vector $Q\in \glie_1,$ such that the data $(\GQ, Q)$ forms a CS $Q$-Lie group after forgetting the real structure. 

Since $Q$ is real, $Q^2\in \glie_{0,\rr}$ is real, and so we can define exponents $\exp(t Q^2)\in \GQ^{red} = T$ for any real number $t\in \rr.$ We say a real $Q$-Lie group $(\GQ, Q)$ is \emph{minimal} if $\exp(\rr Q^2)$ is dense in $\GQ^{red}.$ Note that in this case, the action of $\GQ$ on a manifold or a vector space is determined (by continuity) by the action of $Q$, and a function or vector is $\GQ$-invariant if and only if it is $Q$-invariant.

\begin{rmk}\label{rmk:real-minimal} It is, furthermore, easy to see that any real $Q$-Lie group is a product of a minimal $Q$-Lie group with an even torus.
  \end{rmk}

In our language, Schwarz and Zaboronsky prove the following theorem in the case of vector fields with isolated fixed points.
\begin{cthm}[Schwarz-Zaboronsky, \cite{schwarz-zaboronsky}, Section 4]
Suppose given a (real) minimal super $Q$-Lie group $\GQ$ and a $\GQ$-equivariant space $X$, such that $Q$ has isolated and nondegenerate fixed points on $X$. Then the integral $$\int \omega$$ of a compactly supported $\GQ$-equivariant Berezinian $\omega\in \Gamma_{comp}(X, \Ber_X)$ only depends on the vectors $$\omega\mid_{x_i}\in \Ber_X\mid_{x_i},$$ for $x_i$ running through the set of fixed poins $X^Q.$
\end{cthm}
Moreover, \cite{schwarz-zaboronsky} gives a formula for the contribution at each fixed point in terms of local coordinates and an auxiliary choice of a metric on its tangent space. 

Note that the minimality condition can be easily removed here by Remark \ref{rmk:real-minimal}, and the Schwarz-Zaboronsky result holds for arbitrary real $Q$-groups $\GQ$ acting on $X$ with nondegenerate fixed points. The result of the present paper replaces both the manifold $X$ and also the group $\GQ$ by CS versions. Note that (the CS group underlying) a real $Q$-Lie group $\GQ$ can act on a CS manifold, and we already obtain a new result in this case. Our result is also more general in allowing the group $\GQ$ to be itself CS (and the element $Q$ to be non-real). In this context, when $Q^2\in \glie_0$ is not a real even Lie algebra element, there seems to be no good notion of minimality for a Lie group $\GQ$ associated to $Q$ (and certainly it is unreasonable to expect that the action of $\GQ$ is determined by the vector field associated to $Q$). Thus the decision of carrying the full $Q$-Lie group $\GQ$ (rather than just the vector field given by the action of $Q\in \glie_1$) is necessary in this context.


\subsection{Idea of proof of localization}
The key idea of our proof is to refine a statement about scalars to a statement about \emph{distributions}, i.e., possibly singular integral forms on the manifold $X$. Indeed, on the one hand, any Berezinian $\omega$ on $X$ is a non-singular distribution. On the other hand, if $X$ is a CS manifold and $x\in X$ is a reduced point, we have a \emph{delta distribution} $\delta_x,$ which is an even distribution supported at $x$ and uniquely determined by the property
$$\int_X \delta_x\cdot f = f(x)$$
for any even function $f$ on the CS manifold. In particular, since $\int_X\delta_x = 1$ for any $x\in X,$ the localization formula is equivalent to the statement
$$\int_X \omega = \int_X\sum_{x\in X^Q} (\Loc_x \omega)\cdot \delta_x$$ (for $\omega$ a compactly supported and $\GQ$-invariant Berezinian), where $\Loc_x\omega$ are the scalar local contributions. Our proof proceeds by showing that the difference between these two distributions is a total derivative, i.e., we find an odd compactly supported distribution $D$ such tha
$$Q D = \omega - \sum_{x\in X^Q} (\Loc_x \omega)\cdot \delta_x.$$
Since the integral of a total derivative is zero, the localization theorem follows. We construct such a distribution-valued enhancement of the localization result for certain ``nice'' $\GQ$-manifolds (namely linear $\GQ$-manifolds and $\GQ$-manifolds with no $Q$-fixed points), and show that any manifold $X$ with nondegenerate $Q$-fixed points can be glued out of such models.

\

We note that, at first glance, our proof is quite different from Schwarz and Zaboronsky's argument \cite{schwarz-zaboronsky} (beyond the fact that it applies in the more general CS context). Indeed, they use limits of oscillating integrals instead of the theory of distributions. However their result can be re-interpreted (by viewing regularized limits of oscillating wavefunctions as generalized functions) as also constructing a distribution $D_{SZ}$ such that, once again,
$$Q D_{SZ} = \omega - \sum_{x\in X^Q} (\Loc_x \omega)\cdot \delta_x.$$
Interestingly, the distribution one obtains by interpreting their result in this language is different from ours. For example in the lowest-dimensional nontrivial example, which is $(2,2)$-dimensional case, our ``antiderivative'' distribution $D$ has singularities of the form $\frac{1}{x+iy}$ where $x,y$ are the even coordinates, while $D_{SZ}$ would have regularized singularities of the form $\frac{1}{x^2 + y^2}.$

\subsection{Splitting subgroups and CS volumes of homogeneous superspaces}
In the last part of the paper we consider the case when $X=G/K$ where $G$ is a quasireductive complex algebraic supergroup and $K$ is a connected 
quasireductive subsupergroup.
In this case $X$ is a smooth affine algebraic supervariety and also analytic complex supermanifold. In many cases the underlying manifold $\uX$
has a compact
real form $\uX_{\mathbb R}$ and then $X$ has a canonical structure of CS manifold, see \cite{V}.

Let $\mathfrak g$ and $\mathfrak k$ denote the Lie superalgebras of $G$ and $K$ respectively, and $p\in X$ be the point corresponding to the coset $eK$.
Then $T_pX$ can be identified with $\mathfrak p=\mathfrak g/\mathfrak k$ as a $K$-module. Let us assume that $K\subset SL(\mathfrak p)$. This happens
for example if $\mathfrak p$ is a self-dual $K$-module or if $\mathfrak k=[\mathfrak k,\mathfrak k]$. Then we can fix a $K$-invariant volume form on
$T_pX$ and using left translations extend it to a $G$-invariant volume form $\omega$ on $X$. In this case the CS integral
$I(f)=\int_Xf\omega$ defines a $G$-equivariant map $\mathbb C[X]\to\mathbb C$. If the CS volume $I(1)$ is not zero then $K$ is a splitting subgroup
of $G$.

We consider several examples of $X=G/K$ such that $\uX_{\mathbb R}$ is a (partial) flag manifold for $\uG$ and $T_pX$ is a self-dual $K$-module.
To compute the CS volume of $X$ we choose an odd
element $u\in\mathfrak k_1$ such that $[u,u]\in\operatorname{Lie}\uK$ and consider the corresponding odd vector field $Q=L_u$ on $X$. If $u$ is generic
we can show that $X^Q$ is finite and $\Loc_x\omega$ is the same at all points $x\in X^Q$. Now using our main Theorem \ref{thm:main} we obtain that
CS volume of $X$ is not zero and hence $K$ is splitting.

If $\mathfrak g$ is a finite-dimensional Kac-Moody superalgebra one can choose $K$ such that the defect subgroup $D$ is a splitting subgroup in $K$.
By transitivity of the splitting property we obtain that $D$ is splitting in $G$.

\subsection{Acknowledgments}
The authors would like to thank Alex Sherman, whose work on volumes of homogeneous superspaces with the first author made the present work possible, and for many fruitful discussions. The first author would like to thank Albert S. Schwarz for his helpful explanations of the original localisation result. The second author would like to thank Giovanni Felder, David Kazhdan, and Alexander Polishchuk for discussions that contributed to the present project. The first author was supported by NSF grant 2001191. The second author is covered by ERC grant 810573 and would like to thank the staff at IHES for the pleasant environment in which his portion of the work was completed. 

\subsection{Structure of the paper}
We first prove the localization theorem for a \emph{linear} $\GQ$-equivariant CS manifold $X$, i.e., $X = V_{CS}$ for $V$ an oriented CS linear space with $\GQ$-action. In this case, as fixed points form a linear subspace, the condition of isolated fixed points implies that $X^Q$ is supported at the point $\mathbf{0}\in V_{CS},$ and the condition of nondegeneracy is equivalent to the condition that the lie algebra element $Q\in \glie$ acts invertibly on $V$. In section \ref{sec:lin_alg}, we classify such representations of $\GQ$, which we call \emph{nondegenerate}. We also write down the linear ``localization functional'' $\Loc: \sDet(V^*)\to \cc $ (which depends only on the $\glie$-action and the orientation), and using this functional we write down a precise formula for the local contributions in the localization theorem associated to fixed points on a more general manifold. This allows us to write down a precise localization theorem, Theorem \ref{thm:main_precise}. Note that throughout this section we study (for convenience of notation) the linear $\GQ$-representation $W$ consisting of \emph{linear functions} on our linear space $X$, so that $X$ is the CS space underlying the dual vector space $W^*$ and not $W$ itself.

Then, in section \ref{sec:main_lin}, we prove the precise theorem, Theorem \ref{thm:main_precise}, for $X$ a linear $\GQ$-equivariant CS space. Since the $\GQ$-action on $X$ has a single fixed point, the localization theorem has only one local contribution and states that for any $\GQ$-equivariant form $\omega,$ we have 
$$\int_X \omega = \Loc_W(\omega\mid_\mathbf{0}),$$
where we view $\omega\mid_\mathbf{0}$ as an element of $\Ber_X\mid_{\mathbb{0}} = \sDet(T^*_X\mid_{\mathbf{0}}) = \sDet(V^*),$ and $\Loc_W:\sDet(V^*)\to \cc$ is the localization functional defined in section \ref{sec:lin_alg}.

The proof proceeds by induction on dimension, and using the theory of distributions and generalized functions on CS manifolds. We develop this theory in Section \ref{sec:distributions}. This theory is almost fully analogous to the theory in the even case, with one key difference: namely, that the delta distribution at the origin on a purely odd space $\rr^{0,n}$ (always viewed as a CS space) is actually a smooth distribution, given by the Berezinian,
$$\delta_0 = \theta_1\cdots\theta_n\{\mid d\theta_1\ldots \theta_n\}.$$ 

Via this formalism we show (by induction on dimension) that for any $\GQ$-equivariant distribution $\omega$ on $V_{CS},$ the difference of distributions $$\omega - \Loc_W(\omega\mid_\mathbf{0})\cdot \delta_{\mathbf{0}}$$ is a total derivative on $X$ of another distribution, and hence has integral zero. Since $\int_X \delta_{\mathbf{0}} = 1,$ this implies the localization theorem in this case. 

Finally, in section \ref{sec:beta} we finish the proof of the localization theorem by reducing the localization result for general CS manifold $X$ to the case of a linear $\GQ$-representation. This consists of two steps. First, in \ref{acyc_proof}, we prove the \emph{acyclicity lemma}, a result of independent interest, which states that if $X$ is a manifold with $\GQ$-action and $X^Q$ is empty (i.e., the field $Q$ is everywhere nondegenerate), then the result holds. Second, we show that in the neighborhood of any isolated nondegenerate fixed point $x\in X^Q,$ there is an isomorphism of $\GQ$-equivariant spaces between a neighborhood of $X$ and a nondegenerate CS $\GQ$-representation taking $x$ to $\mathbf{0}.$ Using these arguments, we can decompose any $X$ as a union of $\GQ$-equivariant opens which are equivariantly isomorphic to linear representations (on which the localization theorem follows from section \ref{sec:main_lin}) and opens on which $\GQ$ acts without fixed points (on which the localization theorem follows from the acyclicity lemma). We interpolate between these two using a bump function, and deduce the precise localization result, \ref{thm:main_precise}. 

In section \ref{sec:grassmannians}, we apply this theorem to suitable odd vector fields on homogeneous superspaces to deduce splitting results for super-Lie groups.


\section{Linear algebra}\label{sec:lin_alg}
Suppose $\GQ$ is a $Q$-group and $V$ is a representation of $\GQ$, always assumed finite-dimensional in this section. We say that $V$ is \emph{nondegenerate} if $Q$ acts invertibly. 

Since $Q$ is odd, any $m\mid n$-dimensional nondegenerate $\GQ$-representation $V$ must have equal even and odd dimensions, $$n=m.$$ In particular, the minimal dimension for a nonzero nondegenerate $\GQ$-representation is $(1,1).$ 

For any character $\chi$ of $T,$ we define a $(1,1)$-dimensional representation $V_\chi=\langle z,\theta\rangle$ with scalar $T$-action given by $\chi$ and $$Qz = \theta, \qquad Q\theta = \chi(Q^2)z.$$ It is clear that any $(1,1)$-dimensional $\GQ$-representation is isomorphic to some $V_\chi$. Moreover, nondegenerate $\GQ$-representations are completely reducible into a direct sum of copies of $V_\chi.$
\begin{lm}[structure theorem for nondegenerate $\GQ$-representations]
Any nondegenerate $\GQ$-representation is isomorphic to $$\bigoplus_i V_{\chi_i}$$ for some finite collection of characters $\chi_i.$
\end{lm}
\begin{proof}
Choose a $T$-eigenbasis $z_1,z_1,\dots, z_n$ of $V_0$ with eigencharacters $\chi_1,\dots, \chi_n.$ Write $\theta_i = Q(z_i).$ Since $Q$ is an isomorphism, $\theta_i$ are a basis of $V_1.$ Then $\phi_i:z\mapsto z_i, \theta\mapsto \theta_i$ gives a map $V_{\chi_i}\to V$ and $V=  \sum_i \phi_i(V_{\chi_i}).$\qedhere
\end{proof}

Recall that a \emph{CS structure} on a $\GQ$-representation $V$ is a real structure $$V_{0,\rr}\subset V_0$$ which is closed under the action of $T=\GQ^{red}.$ Note that while $Q^2\in \t{Lie}_T,$ it is an element of the complexified Lie algebra and need not preserve $V_{0,\rr}.$ Evidently, if $X$ is a $G$-manifold and $x\in X^Q,$ then $T_xX$ (as well as its dual) has a structure of a CS $\GQ$-representation. Conversely, if $V$ is a CS $\GQ$-representation, we can view $V$ as a CS $\GQ$-manifold with underlying topological space $V_{0,\rr}.$

Given a $\GQ$-representation $V$ with CS structure, recall that an \emph{orientation} on $V$ is an orientation on $V_{0,\rr}$. If $X$ is an oriented CS manifold with $\GQ$ action and $x$ is a $\GQ$-fixed point, then $T_xX$ inherits the structure of an \emph{oriented} CS $\GQ$-representation.

We define $W_\chi^{or}$ to be the oriented CS $\GQ$-representation with underlying CS space $W_\chi$ and the unique orientation such that the map $z:(W_\chi)_{0,\rr}\to \cc$ is orientation-preserving (where we view $\cc = \rr \oplus i\rr$ as a real space with standard orientation). When clear from context, we write $W_\chi$ for the oriented space $W_\chi^{or}.$

Note that the dual to an oriented CS $\GQ$-representation is again an oriented CS $\GQ$-representation.

Suppose $V$ is a nondegenerate CS $\GQ$-representation and $v\in V_0$ is a (complex) eigenvector for the $T$-action with eigenvalue $\chi.$ Then $Q^2v =\chi(Q^2)v,$ where we view $Q^2\in \tlie_\cc$ as a Lie algebra element and $\chi\in \tlie_\cc^\vee$ as a vector in the dual lattice. Since $Q$ is invertible, we must have $\chi\neq 0.$ Now $\bar{v}$ is an eigenvector with eigenvalue $\bar{\chi}=-\chi$ (since the $T$-action is compatible with real structure), so $v,\bar{v}$ are linearly independent, and thus a nontrivial CS $\GQ$-vector space must be at least $(2,2)$-dimensional. Write 
$$W_\chi : = V_\chi\oplus V_{-\chi}.$$ 
Choose a basis vector $z$ of $V_{\chi,0}\subset W$ and $\bar{z}$ of $V_{-\chi,0}\subset W.$ Then the complex conjugation $z\mapsto \bar{z}$ on $W_{\chi,0}$ turns $W_\chi$ into a CS $\GQ$-representation.
\begin{lm}\label{sdf}
\begin{enumerate}
\item\label{sdf1} Every nondegenerate CS $\GQ$-representation $W$ is a direct sum of copies of $W_\chi,$ and two such sums are isomorphic if and only if they differ only by replacing copies of $W_\chi$ by $W_{-\chi}.$
\item\label{sdf2} Every oriented nondegenerate CS $\GQ$-representation $W$ is a direct sum of copies of $W_\chi^{or},$ and two such sums are isomorphic if and only if they differ only by an even number of replacements of $W_\chi^{or}$ by $W_{-\chi}^{or}.$
\end{enumerate}
\end{lm}
\begin{proof}
Choose a partition $\Lambda_T\setminus \{0\} = \Lambda_+\sqcup -\Lambda_+$ of the character lattice of $T$ into positive and negative characters. Let $W_{0,+}$ be the eigenspace of $W$ corresponding to $\Lambda_+,$ and let $z_1,\dots, z_{n/2}$ be a basis. Then (since $\bar{\chi}=-\chi$ for any nonzero character), the elements $z_1,\dots, z_{n/2}, \bar{z}_1,\dots, \bar{z}_{n/2}$ form a basis of $W$. The subspace spanned by $z_i, \bar{z}_i, Q(z_i), Q(\bar{z}_i)$ is a subrepresentation (since $Q^2$ acts by scalars in a single eigenspace) isomorphic to $W_{\chi_i},$ and $W$ is a sum of such. Any other sum decomposition must be isomorphic as a $T$-representation, hence must have the same characters $\chi_i$ up to sign. And if we are in the oriented context, switching a $W_\chi^{or}$ to a $W_{-\chi}^{or}$ flips the orientation, hence there must be an even number of sign changes between the two sets of characters.
\end{proof}

In the remainder of the section, we will write down a certain linear-algebraic invariant of an oriented CS $G$-representation $W$ which will be responsible for the local contribution in the localization formula for $W = T^*_{x,X}$ for $x\in X$ a $Q$-fixed point. This linear contribution depends on a notion of ``Pfaffian,'' a scalar invariant of an oriented CS $\GQ$-representation, which is a square root of $Q^2\mid_{W_0}$ (up to some factors of $i$). We begin by writing down down this scalar invariant.

\begin{defi}
Suppose $W$ is an oriented nondegenerate CS $\GQ$-representation, such that $$W \cong \bigoplus_{i=1}^n W_{\chi_i}.$$ Write $$\Pf(W): = \prod_i \chi_i(Q^2).$$
\end{defi}
Note that by part \ref{sdf2} of Lemma \ref{sdf}, any oriented nondegenerate CS representation $W$ has such a decomposition and two different decompositions differ by an even number of signs, thus $\Pf(W)$ is well defined. Note also that because of nondegeneracy, $\chi(Q^2)\neq 0$ for any torus character $\chi$ appearing in $W$, so $\Pf(W) \neq 0.$

The reason we call this expression the \emph{Pfaffian} is the following.
\begin{prop}
Assume $Q^2\mid{W_{0,\rr}}$ is real, and we specify a choice of metric on $W_{0,\rr}$ compatible with the $T$-action, such that $Q^2\mid_{W_{0,\rr}}$ is skew-symmetric. Then $\Pf(W)$ is isomorphic to the ordinary Pfaffian of the matrix $Q^2\mid_{W_{0,\rr}}.$ 
\end{prop}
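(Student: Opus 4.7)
The plan is to reduce the statement to a $(2,2)$-dimensional computation using multiplicativity of both sides, then verify it by direct computation on the building block $W_\chi$.

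First I would observe that by Lemma \ref{sdf}\ref{sdf2}, $W$ decomposes as an oriented CS $\GQ$-representation into a sum $\bigoplus_{i=1}^m W_{\chi_i}^{or}$. Since the metric on $W_{0,\rr}$ is $T$-invariant, the standard argument (any two $T$-eigenspaces whose characters do not sum to zero are orthogonal under any $T$-invariant form) shows that the real subspaces $(W_{\chi_i})_{0,\rr}$ are pairwise orthogonal in $W_{0,\rr}$. Consequently $Q^2\mid_{W_{0,\rr}}$ is block-diagonal, and the classical Pfaffian is multiplicative over the blocks; on the other hand $\Pf(W) = \prod_i \chi_i(Q^2) = \prod_i \Pf(W_{\chi_i})$ is manifestly multiplicative by definition. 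So it is enough to treat a single block $W = W_\chi$.

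For this block, I would pick a basis vector $z$ of $V_{\chi,0}\subset W$ with its conjugate $\bar z$ spanning $V_{-\chi,0}$, rescaling so that $e_1 = (z+\bar z)/\sqrt{2}$ and $e_2 = i(z-\bar z)/\sqrt{2}$ form a positively oriented orthonormal basis of $(W_\chi)_{0,\rr}$ (the orientation convention on $W_\chi^{or}$ is exactly the one making the complex coordinate $z$ orientation-preserving, so this is compatible). The relations $Q^2 z = \chi(Q^2) z$ and $Q^2 \bar z = \overline{\chi(Q^2)}\, \bar z = -\chi(Q^2)\,\bar z$ (using $\bar\chi = -\chi$ for characters of a compact torus) give immediately
\[ Q^2 e_1 = -i\,\chi(Q^2)\,e_2, \qquad Q^2 e_2 = i\,\chi(Q^2)\,e_1. \]
Since $d\chi$ maps $\tlie_\rr$ into $i\rr$, write $\chi(Q^2)=i\lambda$ with $\lambda \in \rr$; the matrix of $Q^2$ in the basis $(e_1,e_2)$ is then $\lambda\,\bigl(\begin{smallmatrix} 0 & 1\\ -1 & 0\end{smallmatrix}\bigr)$, whose classical Pfaffian is $\lambda$. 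Meanwhile $\Pf(W_\chi)=\chi(Q^2)=i\lambda$, so the two invariants are proportional by the universal factor $i$ per block, which is the meaning of the word ``isomorphic'' in the statement (both quantities live canonically in one-dimensional spaces that differ by a fixed power of $i$, and pasting over the $m$ blocks gives the universal ratio $i^m$).

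The main thing to watch out for is sign and orientation bookkeeping: one must check that the orthogonal block decomposition actually respects orientations (so that the classical Pfaffian genuinely factorizes rather than picking up a sign from reordering), and that the reality hypothesis on $Q^2\mid_{W_{0,\rr}}$ forces $\chi_i(Q^2)\in i\rr$ for each block separately, not merely in aggregate. Both points follow from $T$-invariance of the metric together with part \ref{sdf2} of Lemma \ref{sdf}, which constrains the ambiguity in the decomposition to sign flips that also flip the orientation of the corresponding summand, leaving the overall product invariant. Once these bookkeeping issues are settled, the $2\times 2$ computation above is all that is needed.
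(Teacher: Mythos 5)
Your proposal is correct and follows essentially the same route as the paper: both sides are multiplicative under the (orthogonal, by $T$-invariance of the metric) direct sum decomposition of Lemma \ref{sdf}, reducing everything to the single block $W_\chi$, which the paper dismisses as ``obvious'' and you verify by the explicit $2\times 2$ computation. Your careful tracking of the factor of $i$ (i.e.\ whether $\chi(Q^2)$ denotes the eigenvalue of $Q^2$ or its quotient by $i$, a point on which the paper's own conventions in Section \ref{sec:2-dim_lin} are not entirely consistent) is a welcome addition rather than a deviation.
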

\begin{proof}
Since both functionals are multiplicative under direct sum and $W$ is a direct sum of copies of $W_\chi$, it suffices to check that this fact holds for $W_\chi,$ where it is obvious.
\end{proof}

Now recall that the local contribution in the integral formula is a linear functional on the fibre of the Berezinian bundle $\sDet(T^*_{x,X})$ for $X$ a fixed point. So, for $W = T^*_{x,X}$ a nondegenerate oriented $\GQ$-representation with CS structure, we must produce a class $\Loc_W\in \sDet(W)^{-1} = \det(W_0)^{-1}\otimes \det(W_1) = \hom(\det(W_0),\det(W_1)).$ The determinants of the two isomorphisms $\alpha: = Q\mid_{W_0}:W_0\to W_1$ and $\beta = (Q\mid_{W_1})^{-1}:W_0\to W_1$ give two different candidates for such a class, which differ by the factor $\det(\beta^{-1}\alpha) = \det(Q^2\mid_{W_0}).$ The local contribution $\Loc_W$ turns out to be (up to a fixed factor) a geometric mean between these two natural classes. To write down such a geometric mean, we need a root of the determinant, which is provided by the Pfaffian. We are now ready to define the localization form.
\begin{defi}\label{def:linloc} We define the localization functional of $W$ to be
\begin{align}
\Loc_W : = (2\pi)^{n/2} \Pf(W)^{-1} \det(Q\mid_{W_0})\in \det(W_0)^{-1}\det(W_1) = \sDet(W^*).
\end{align}
\end{defi}
Given an element $\omega\in\sDet(W),$ we write $\Loc_W(\omega)$ for the pairing between $\Loc_W$ and $\omega$. In the localization formula, $W = T^*_{X,x}$ and $\omega$ is the fiber of a volume form.

We record here the following obvious proposition. Note that if $$0\to W\to W'\to W''\to 0$$ is a short exact sequence of super-vector spaces, then we have canonically $\sDet(W) = \sDet(W')\otimes \sDet(W'').$ Via this identification, we recurd the following obvious proposition.
\begin{prop}\label{loc_multiplicativity}
If $0\to W\to W'\to W''\to 0$ is a short exact sequence of nondegenerate and oriented CS $\GQ$-representations, then $\Loc(W) = \Loc(W')\Loc(W'').$
\end{prop}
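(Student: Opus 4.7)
The plan is to reduce any short exact sequence to a direct sum using the structure theorem (Lemma \ref{sdf}), and then verify multiplicativity factor-by-factor in the formula defining $\Loc_W$.

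First, I would show that any short exact sequence $0\to W\to W'\to W''\to 0$ of nondegenerate CS $\GQ$-representations admits a $\GQ$-equivariant splitting. Pick a positive polarization of the character lattice as in the proof of Lemma \ref{sdf} and let $\bar z_1,\dots,\bar z_k$ be a $T$-eigenbasis for the positive half of $W''_0$ with eigencharacters $\chi_i$. Since $T$ is a torus and the surjection $W'_0\to W''_0$ is $T$-equivariant, each $\bar z_i$ lifts to a $T$-eigenvector $\tilde z_i\in W'_0$. For each $i$, the subspace spanned by $\tilde z_i,\overline{\tilde z_i},Q\tilde z_i,Q\overline{\tilde z_i}$ is $\GQ$-stable (using $Q^2=\chi_i(Q^2)$ on the $\chi_i$-eigenspace) and maps isomorphically to the corresponding $W_{\chi_i}$-summand of $W''$. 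Summing over $i$ yields a $\GQ$-equivariant section of $W'\to W''$, so $W'\cong W\oplus W''$.

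Second, with such a splitting fixed, I would verify that each of the three factors in $\Loc_{W'}=(2\pi)^{n'/2}\Pf(W')^{-1}\det(Q|_{W'_0})$ is multiplicative under the direct sum. The Pfaffian is manifestly a product over the $W_\chi$-summands, so $\Pf(W\oplus W'')=\Pf(W)\Pf(W'')$. The linear map $Q|_{W'_0}: W'_0\to W'_1$ decomposes as the direct sum of $Q|_{W_0}$ and $Q|_{W''_0}$, so under the canonical tensor decomposition $\sDet(W'^*)\cong \sDet(W^*)\otimes \sDet(W''^*)$ its determinant is the tensor product of the two determinants. And $(2\pi)^{n/2}$ is multiplicative because the number of $W_\chi$-summands is additive, $n'=n+n''$. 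The orientation hypothesis on all three of $W,W',W''$ ensures the various sign ambiguities in the decomposition (flipping pairs of $W_{\chi}\leftrightarrow W_{-\chi}$, cf.\ Lemma \ref{sdf}) cancel out consistently. Combining these gives the claimed multiplicativity.

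The only potentially subtle point is the comparison between the canonical Berezinian identification for the abstract SES and the one coming from a chosen splitting: these differ by a super-unipotent automorphism of $W'$, which has trivial super-determinant, so the two identifications agree and the equality of $\Loc$-classes is unambiguous. Once this identification is in place, the proposition is the observation that $\Pf$, $\det(Q|_{-_0})$, and the normalization each split individually.
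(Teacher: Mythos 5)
Your proposal is correct and follows essentially the same route as the paper: the paper's proof also invokes complete reducibility to split the sequence $\GQ$-equivariantly and then observes that the Pfaffian and $\det(Q\mid_{W_0})$ (and the normalization) are each multiplicative under direct sum. Your additional remarks --- the explicit eigenvector-lifting construction of the splitting and the observation that the splitting-induced and canonical $\sDet$ identifications differ by a unipotent automorphism of trivial Berezinian --- just spell out details the paper leaves implicit.
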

\begin{proof}
Note that by our complete reducibility result, we can choose a splitting $W \cong W'\oplus W''.$ Both the Pfaffian and $\det(Q\mid_{W_0})$ are then obviously multiplicative under direct sum. 
\end{proof}
\begin{rmk}
While our notion of a $Q$-group $\GQ$ has a fixed choice of $Q\in \glie,$ the localization formula only depends on a $\GQ$-invariant volume form. As such it should only depend on $Q$ up to a scalar. And indeed, note that if we replace $Q$ by $Q' = cQ$ for $c\in \cc^*$ a nonzero scalar, we get $\Pf_{Q'}(W) = (2\pi)^{n/2} \Pf(W')^{-1} \det(Q'\mid_{W_1})\in \det(W_0)\det(W_1)^{-1}\in \sDet(W).$ Now $\Pf(W')$ is a Pfaffian invariant of $(Q')^2 = c^2Q^2,$ so $\Pf(W') = c^{2n/2}\Pf(W) = c^n\Pf(W).$ We also have $\det(Q')_{\mid W_1} = c^n\det(Q)_{\mid W_1},$ and these two factors cancel.
\end{rmk}

We are now ready to give a precise statement of the localization theorem.
\begin{thm}\label{thm:main_precise}[Precise form of Theorem \ref{thm:main}]
Suppose $\GQ$ is a $Q$-group, $X$ is a CS $\G/q$-manifold that satisfies the conditions of Theorem \ref{thm:main} (i.e., such that the action has isolated and nondegenerate fixed points) and $\omega$ is a $\GQ$-equivariant form. Then $$\int\omega = \sum_{x\in X^Q} \Loc_{T_x^* X}\omega\mid_{\{x\}}.$$
\end{thm}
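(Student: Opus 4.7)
The plan is to follow the distributional strategy outlined in the introduction: refine the scalar equality $\int_X \omega = \sum_{x \in X^Q} \Loc_{T^*_xX}(\omega|_x)$ to an equality of distributions on $\uX$, and then exhibit the difference as a total $Q$-derivative. Concretely, I aim to produce a compactly supported odd distribution $D$ on $\uX$ with
\[
QD \;=\; \omega \;-\; \sum_{x \in X^Q} \Loc_{T^*_x X}(\omega|_x)\cdot \delta_x;
\]
integrating both sides and using $\int_X QD = 0$ (as the integral of a total derivative of a compactly supported distribution) together with $\int_X \delta_x = 1$ then yields the theorem.

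The heart of the argument is the linear case, where $X = V_{CS}$ for $V$ an oriented nondegenerate CS $\GQ$-representation and the unique fixed point is $\mathbf 0$. I would first develop the theory of distributions on CS manifolds (Section \ref{sec:distributions}), the key subtlety being that on a purely odd space the delta distribution at the origin is actually a smooth Berezinian $\theta_1\cdots\theta_n\,\{|d\theta_1\cdots d\theta_n|\}$. I would then prove the linear version of the precise theorem by induction on $\dim V$, using Lemma \ref{sdf} to peel off a $(2,2)$-dimensional direct summand $W_\chi$ at each step and a Fubini-style argument on the complement, with $\Loc$ behaving multiplicatively by Proposition \ref{loc_multiplicativity}. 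The base case is an explicit distributional calculation on $W_\chi$: one exhibits a $\GQ$-equivariant $D$ whose even singularity has the form $1/(x+iy)$ (where $x, y$ are the even real coordinates on $W_{\chi,0,\rr}$), verifies $QD$ picks up exactly the distribution $\omega - \Loc_{W_\chi}(\omega|_0)\cdot \delta_0$, and tracks the overall scalar factor $(2\pi)^{n/2}\Pf(W)^{-1}\det(Q|_{W_0})$ against the definition of $\Loc_W$.

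To pass from the linear case to the general one, I would proceed in two steps. First, I would prove the acyclicity lemma (Lemma \ref{lm:qex}): when $Q$ is nowhere vanishing, construct an odd $\GQ$-invariant $\beta \in C^\infty(X)_1$ with $Q\beta = 1$, by producing local solutions on small $\GQ$-invariant opens where $Q$ can be straightened out, gluing via a $\GQ$-invariant partition of unity, and averaging over the compact group $\GQ$ to restore equivariance. This immediately gives the theorem on any open where $Q$ has no fixed points, since $\omega = Q(\beta\omega)$ is then $Q$-exact. Second, I would establish an equivariant linearization: near any isolated nondegenerate fixed point $x \in X^Q$, there is a $\GQ$-invariant open neighborhood of $x$ that is $\GQ$-equivariantly isomorphic to a neighborhood of $\mathbf{0}$ in $V_{CS}$ for $V = T_xX$, obtained by averaging the exponential map of any $T$-invariant Riemannian metric on $X^{red}$ and extending formally in the odd directions using the $T$-action on the odd normal bundle. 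Gluing via a $\GQ$-invariant bump function that equals $1$ near all fixed points and is supported inside the union of these linearizing neighborhoods, the integral $\int_X \omega$ splits as a sum of linear-case contributions plus a term annihilated by acyclicity; since $\Loc_{T^*_xX}(\omega|_x)$ depends only on the fiber of $\omega$ at $x$, the cutoff does not affect the local contributions, and the precise formula of Theorem \ref{thm:main_precise} drops out.

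The main obstacles I expect are twofold. The first is the explicit distributional base case on $W_\chi$: one must identify the correct antiderivative with singularity of Cauchy type $1/(x+iy)$ (rather than the real-analytic regularization $1/(x^2+y^2)$ that Schwarz-Zaboronsky implicitly use), and verify carefully that $QD$ behaves as claimed when paired against arbitrary compactly supported even test functions, keeping track of odd test factors via the subtle behavior of the odd $\delta$. The second is the equivariant linearization when $Q^2 \in \tlie_\cc$ is genuinely complex: in this case there is no honest flow of $Q^2$ on $X$, so ordinary exponential-map arguments do not directly apply; instead one must linearize using only the residual real torus action of $T = \GQ^{red}$ and then verify that the CS structure (not merely the underlying smooth structure) is respected by the resulting chart.
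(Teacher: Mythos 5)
Your overall strategy matches the paper's: refine the scalar equality to an equality of distributions, prove the linear case by induction on dimension with the $(2,2)$-dimensional base case built around the Cauchy-type antiderivative $\theta/(i\lambda z)$, prove an acyclicity lemma to handle the fixed-point-free region, establish equivariant local linearization, and glue with a bump function. However, there are two places where your execution of the key lemmas would fail as written.

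First, in the acyclicity lemma you ask for an \emph{odd $\GQ$-invariant} $\beta$ with $Q\beta=1$; this is impossible, since $\GQ$-invariance forces $Q\beta=0$. The paper's construction is subtler: one first constructs (by a partition-of-unity gluing argument, Lemma~\ref{lm:dred}) an odd $\alpha$ with $(Q\alpha)^{red}=1$, then averages $\alpha$ \emph{over the even compact torus $T=\GQ^{red}$} (not over $\GQ$, which is a supergroup and cannot be integrated over in the naive sense) to obtain a $T$-equivariant $[\alpha]$. The crucial gain from $T$-equivariance is that $Q^2[\alpha]=0$, because $Q^2\in\tlie_\cc$ acts trivially on $T$-invariants; this is exactly what makes the division $\beta:=[\alpha]/Q[\alpha]$ satisfy $Q\beta=1$. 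Without identifying this mechanism, the "restore equivariance by averaging" step as you describe it does not close.

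Second, your equivariant linearization via the exponential map of a $T$-invariant metric on $X^{red}$, extended formally in the odd directions using the $T$-action on the normal bundle, only produces a $T$-equivariant chart. You do flag the difficulty that $Q^2$ is genuinely complex, but the real issue you do not address is $Q$-equivariance itself: nothing in the exponential-map construction makes the odd vector field $Q$ exactly linear in the new coordinates. The paper's Lemma~\ref{lm:local_linearity} avoids metrics and exponential maps entirely: it constructs a $T$-equivariant, real-compatible splitting $L_0:W_0\to I_{x,0}$ of the even part of the differential $D:I_x\to W=T^*_xX$, and then — this is the key trick — defines the odd part as $L_1:=Q\circ L_0\circ Q^{-1}$, using nondegeneracy of $Q$ on $W$. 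The resulting $L=L_0\oplus L_1$ automatically commutes with $Q$ and also with $T$, so the chart is genuinely $\GQ$-equivariant, and the CS implicit function theorem (Proposition~\ref{xcvb}) finishes the job. This algebraic device is what your proposal is missing; without it, the linearization step of your argument does not produce a $Q$-equivariant chart, and the reduction to the linear case fails.
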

Here $T_x^*X$ has CS $\GQ$-representation structure induced from the action of $\GQ$ on $X$ and this structure is nondegenerate because of the nondegeneracy, hence we are in a situation where $\Loc_W$ is well-defined. 

\subsection{$(2,2)$-dimensional case, in coordinates}\label{sec:2-dim_lin}
Suppose $\GQ$ is a $Q$-group, $\chi$ is a $T$-character and $W = W_\chi$ is a two-dimensional CS $\GQ$-representation. Let $z$ be a torus $\chi$-eigenvector and $\bar{z}$ its conjugate. We can assume WLOG (up to switching $z$ and $\bar{z}$) that $z$ gives an oriented map $W_{\chi, 0}\to \cc.$ Then the $\tlie$-action is given by $t(z) = i\chi(t) z, t(\bar{z}) = -i\chi(t)$ for $t\in \tlie_\rr.$ Write $\lambda: = \chi(Q^2),$ which is nonzero by nondegeneracy but not necessarily real. Write $\theta = Qz, \bar{\theta} = Q\bar{z}.$ Note that there is no sense in which $\theta$ and $\bar{\theta}$ are conjugate since the space of odd linear functions $W_1^*$ has no distinguished real structure (rather, we use the bar notation for consistency with $z, \bar{z}$). However, if we had chosen a real structure on $W$ for which $Q\in\glie$ is real, then $\theta$ and $\bar{\theta}$ would be conjugate. 

For the reader's convenience, we record the full $\glie$-action in this basis.
\begin{align}
t(z) = i\chi(t) z,\quad t(\theta) = i\chi(t)\theta, \quad t(\bar{z}) = -i\chi(t) z, t(\bar{\theta}) = -i\chi(t) \theta\\
Q(z) = \theta \quad Q(\bar{z}) = \bar{\theta}\\
Q(\theta) = i\lambda z \quad Q(\bar{\theta}) = -i\lambda \bar z.
\end{align}
Note that in this basis, 
$Q^2 = \begin{pmatrix}i\lambda&0\\0&-i\lambda\end{pmatrix}.$ 

We compute the Pfaffian, using $W_{0,+} = \langle z\rangle$.
 The orientation assumption on $z$ then gives 
$$\Pf_W = \det\left(\frac{ Q^2\mid_{W_{0,+}}}{i}\right) = \lambda.$$
The Berezinian term in the integral formula is $\det(Q\mid_{W_0}) \in \sDet(W)^{-1} = \Lambda^2W_1\Lambda^2W_0^*.$ Recall that the determinant of an isomorphism $M:V\to W$ of even spaces is given by the wedge product of a basis $x_i$ of $V$ divided by the wedge product of $M(x_i),$ giving $\det(Q\mid_{W_0}) = \frac{\theta\wedge \bar{\theta}}{z\wedge \bar{z}}.$ Finally, the integral form is
$$\Loc_W = 2\pi \frac{\theta\wedge \bar{\theta}}{\lambda z\wedge \bar{z}}.$$

\begin{rmk}[Real basis in real case]
Note that we can always choose a real basis of $W_0$ given by $x = \frac{z+\bar{z}}{2}, y = \frac{z-\bar{z}}{2i},$ though it is less convenient for calculations. 
If all of $W$ has a distinguished real structure compatible with $\GQ$-action, and $Q$ is real, then we can also choose the real basis on $W_1$ given by $\theta_x : = \frac{\theta + \bar{\theta}}{2}, \theta_y: = \frac{\theta - \bar{\theta}}{2i}.$ In this basis, we have
$$Q^2 = \begin{pmatrix}0&\lambda\\-\lambda &0\end{pmatrix}.$$
The localization form in this basis is 
$$\Loc_W = 2\pi \frac{\theta_x\wedge \theta_y}{\lambda x\wedge y}.$$
\end{rmk}

Note that if $W=V\oplus V^*$ then $W$ has a canonical volume form $\omega$ which pairs $\sDet V$ and $\sDet V^*$. If $u_1,\dots,u_n$ is basis
  in $V_0$ and $v_1,\dots,v_n$ is a basis in $V_1$, and $u_1^\vee,\dots,u_n^\vee, v^\vee_1,\dots,v^\vee_n$ is the dual basis in $V^*$
  then $$\omega=\frac{u_1\wedge\dots\wedge u_n\wedge u_1^\vee\wedge\dots\wedge u_n^\vee}{v_1\wedge\dots\wedge v_n\wedge v^\vee_1\wedge\dots\wedge v^\vee_n}.$$
  The volume form $\omega$ does not depend on the choice of a basis.
  \begin{lm}\label{canonical_form}
    Let $W=V\oplus V^*$ and $\dim V=(n|n)$.  Assume that one can choose an $u^2$ eigenbasis $z_1,\dots, z_n$ in $V_0$ such that $\bar z_i=z_i^\vee$.
    Then
    $\Loc_W\omega=\left(\frac{2\pi}{\mathbf i}\right)^n$. 
 \end{lm}
 \begin{proof}
   Let $\mathbf i\lambda_1,\dots,\mathbf i\lambda_n$ be the corresponding eigenvalues.
   Set $\theta_i=Qz_i$. Then
   $$\langle Q\bar z_i, \theta_j\rangle=-\langle \bar z_i,Q\theta_j\rangle=\langle z_i, \mathbf i\lambda_jz_j\rangle=\mathbf i\lambda_i\delta_{i,j}.$$
   Therefore $\bar\theta_i=\mathbf i\lambda_i\theta_i^\vee$. By Proposition \ref{loc_multiplicativity} we have
   $$\Loc_W=\frac{(2\pi)^n}{\prod \lambda_j}\frac{\theta_1\wedge\dots\theta_n\wedge\bar\theta_1\wedge\dots\wedge\bar\theta_n}{z_1\wedge\dots z_n\wedge\bar z_1\wedge\dots\wedge\bar z_n}=\left(\frac{2\pi}{\mathbf i}\right)^n\frac{\theta_1\wedge\dots\theta_n\wedge\theta_1^\vee\wedge\dots\wedge\theta_n^\vee}{z_1\wedge\dots z_n\wedge z^\vee_1\wedge\dots\wedge z^\vee_n}.$$
   This implies the statement.
 \end{proof}
 \begin{rmk}\label{distinct} The assumption of Lemma \ref{canonical_form} holds if $V=\bigoplus V_{\chi_j}$ if $\dim V_{\chi_j}=(1|1)$ for every $j$
   and $\chi_i\neq\chi_j$ for $i\neq j$.
   \end{rmk}

\section{Calculation on linear space.}
In this section we prove the main theorem for a linear local model.
\begin{thm}\label{thm:linear_calc}
Suppose $\GQ$ is a $Q$-group and $W$ is an oriented $(n, n)$-dimensional nondegenerate CS $\GQ$-representation. Then Theorem \ref{thm:main_precise} holds for the linear manifold $X = W^*.$
\end{thm}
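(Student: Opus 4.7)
The plan is to combine the structure theorem for nondegenerate representations (Lemma \ref{sdf}) with the distributional strategy from Section 1.3. By Lemma \ref{sdf}(\ref{sdf2}) we may fix an isomorphism $W \cong \bigoplus_{i=1}^n W_{\chi_i}^{or}$ of oriented CS $\GQ$-representations, so that $W^*_{CS}$ splits as a product $\prod (W^*_{\chi_i})_{CS}$ with diagonal $\GQ$-action. By Proposition \ref{loc_multiplicativity} both sides of the desired identity are multiplicative in such a direct sum, so it is natural to attempt an induction on the number $n$ of summands, with base case $n=1$ (the $(2,2)$-dimensional case) handled explicitly in the coordinates of Section \ref{sec:2-dim_lin}.

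In the base case, $T$-invariance of $\omega$ constrains it to a function of the four monomial invariants $z\bar z,\; z\bar\theta,\; \bar z\theta,\; \theta\bar\theta$ times the standard Berezinian, and $Q$-closedness pins down the coefficients. A direct Berezinian integration then matches $\int\omega$ with the scalar $\Loc_{W_\chi}(\omega|_{\mathbf{0}})$ whose coordinate formula $2\pi\, \theta\wedge\bar\theta/(\lambda\, z\wedge\bar z)$ was recorded at the end of Section \ref{sec:2-dim_lin}.

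For the inductive step, following the outline of Section 1.3, it suffices to produce an odd compactly supported distribution $D$ on $W^*_{CS}$ with
\[ QD \;=\; \omega - \Loc_W(\omega|_{\mathbf{0}})\,\delta_{\mathbf{0}}, \]
since integrating then gives the theorem because $\int \delta_{\mathbf{0}} = 1$ and total $Q$-derivatives of compactly supported distributions integrate to zero. On $W^* \setminus \{\mathbf{0}\}$ the vector field $Q$ is nowhere vanishing by nondegeneracy, so the $Q$-acyclicity lemma (Lemma \ref{lm:acyc}) supplies an odd function $\beta$ with $Q\beta = 1$; hence $\beta\omega$ is a smooth $Q$-antiderivative of $\omega$ there. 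After tempering by a $\GQ$-invariant radial cutoff, this extends to a compactly supported distribution whose $Q$-derivative differs from $\omega$ by a $\GQ$-equivariant distribution supported at $\{\mathbf{0}\}$. Any such distribution is a scalar multiple $c(\omega)\,\delta_{\mathbf{0}}$ because $\mathbf{0}$ is an isolated equivariant point and the $T$-action on $\Ber_X|_{\mathbf{0}}$ is trivial (the stabilizer acts trivially on the Berezinian fiber at an isolated fixed point, by cancellation of characters in $\sDet$). To identify $c(\omega)$ with $\Loc_W(\omega|_{\mathbf{0}})$, I would split off a single summand $W_{\chi_n}$, push $\omega$ forward as a distribution along the projection $W^* \to (W_{\chi_n}^*)_{CS}$, and apply the inductive hypothesis on $W'$ together with the base case on $W_{\chi_n}$; Proposition \ref{loc_multiplicativity} glues the two local factors to yield precisely $\Loc_W$.

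The main obstacle is analytic. The antiderivative $\beta\omega$ has genuine singularities at $\mathbf{0}$ (of the explicit form $1/(z+iy)$ in the $(2,2)$-dimensional case noted in the introduction), and its growth with dimension must be controlled well enough to guarantee a bona fide compactly supported distribution with a well-defined residue at $\{\mathbf{0}\}$. A secondary subtlety is that a $\GQ$-equivariant Berezinian on a product does not split as a pure tensor of equivariant forms on the factors, so the inductive step truly requires the distributional formalism developed in Section \ref{sec:distributions} and a careful pushforward along the projection to bring the inductive hypothesis to bear.
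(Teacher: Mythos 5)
Your overall architecture --- splitting $W$ into $(2,2)$-dimensional blocks $W_{\chi_i}$, inducting on the number of blocks, and realizing the localization identity as the statement that $\omega - \Loc_W(\omega|_{\mathbf{0}})\delta_{\mathbf{0}}$ is a total $Q$-derivative of a distribution --- is exactly the paper's. But there is a genuine gap at the point you yourself flag as ``the main obstacle,'' and it is not a technicality one can defer. Obtaining $\beta$ with $Q\beta=1$ from the acyclicity lemma on $W^*\setminus\{\mathbf{0}\}$ gives \emph{no control whatsoever} on the growth of $\beta$ near the puncture: the lemma is built from partitions of unity and an arbitrary splitting of a bundle surjection, so $\beta\omega$ need not even be locally integrable at $\mathbf{0}$, hence need not define a distribution after cutoff. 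Moreover, even granting a regularization, your claim that the resulting point-supported defect must be a pure multiple of $\delta_{\mathbf{0}}$ is false as stated: distributions supported at a point are spans of derivatives of $\delta_{\mathbf{0}}$, and $T$-invariance does not kill these (e.g.\ the Laplacian of $\delta_{\mathbf{0}}$ is rotation-invariant); excluding them requires a homogeneity estimate you do not have. The paper avoids both problems by never invoking the abstract acyclicity lemma here. Instead it writes the antiderivative \emph{explicitly} on a single $(2,2)$-block, $\sigma = \theta/(i\lambda z)$, whose only singularity is the locally integrable, classically regularized $1/z$, and computes $Q\sigma = 1 - \frac{2\pi}{\lambda}\,\delta_{\mathbf{0}}/\{dz\,d\bar z\mid d\theta\,d\bar\theta\}$ via the identity $\bar\partial(1/z) = -2\pi i\,\delta_0/(dz\,d\bar z)$ (Lemma \ref{lm:interp_eq}); the induction step then pulls this one fixed generalized function back along the projection to the block and restricts $\omega$ to the complementary codimension-$(2,2)$ fiber, where the inductive hypothesis and Proposition \ref{loc_multiplicativity} finish.

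A secondary but related omission is in your base case. Saying ``a direct Berezinian integration then matches $\int\omega$ with $\Loc_{W_\chi}(\omega|_{\mathbf{0}})$'' hides the entire analytic content: after using $Q$-closedness to express the top coefficient $d$ of $\omega$ as $-\bar\partial a/(i\lambda z)$ in terms of the bottom coefficient $a$, relating $\int d\,dz\,d\bar z$ to $a(0)$ is precisely the Cauchy--Pompeiu residue formula --- the same $\bar\partial(1/z)$ identity above, which is where the factor $2\pi$ comes from. So the one ingredient your sketch never names is the one that makes the theorem true. The fix is to replace your abstract $\beta$ by the paper's explicit $\sigma$ (or equivalently to carry out your base-case computation honestly and then pull it back block by block); with that substitution the rest of your outline goes through.
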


In the remainder of this section we prove this local theorem. For $v\in W_0,$ we write $\mathbf{v}$ for $v$ considered as a point of the linear space $X$ (with underlying manifold $\uX = W_{0,\rr}$). In this case, there is a unique $Q$-fixed point $\mathbf{0},$ which obviously satisfies the conditions of Theorem \ref{thm:main}. We prove the theorem by induction on $n$. Recall that the CS and nondegeneracy conditions imply that $n$ must be even. So suppose Theorem \ref{thm:main_precise} holds for $(n-2,n-2)$-dimensional manifolds. We argue using the theory of distributions. 

\subsection{Distributions and generalized functions}\label{sec:distributions}
Recall that a function on a super-ringed space $X = (\uX, C^\infty_X)$ is \emph{compactly supported} in $\uU\subset \uX$ if the restriction of $f$ is zero on the complement of a compact subset of $\uU$, and write $C^\infty_c(\uU)$ for the space of compactly supported functions. 
\begin{defi} A \emph{distribution} $\xi$ on $U$ is a functional on compactly supported functions $\xi:f\mapsto \langle \xi,f\rangle$ i.e., an element of $C^\infty_c(\uU)^\vee.$ 
\end{defi}
When $X$ admits partitions of unity, distributions on $X$ form a sheaf; it is $\zz/2$-graded and pre-composition with multiplication by functions makes it into a $C^\infty_X$-module. We will also need a dual notion, of \emph{generalized functions}.
\begin{defi} For $X$ a supermanifold, we define the sheaf of \emph{generalized functions} $$C^{-\infty}_X: = \t{Distr}_X\otimes_{C^\infty_X} \Ber_X^{-1}$$ (equivalently, this is the dual sheaf to compactly supported Berezinian sections). 
\end{defi}
Any Berezinian can be viewed as a distribution ($\omega$ goes to the functional $f\mapsto \int f\omega$), giving a map of $C^\infty$-module sheaves $\Ber_X\to \t{Distr}_X,$ thus also $C^\infty_X\to C^{-\infty}_X.$ This is the super-analogue of the inclusion of smooth functions into generalized functions (indeed, in the super case it is also true that $C^{-\infty}$ is a completion of $C^{\infty}$ in a suitable topology). Note that for the purely odd linear CS vector space $Y = \Pi W_1,$ all functions are compactly supported so we have $\t{Distr}_Y = C^\infty(Y).$ The integral pairing $\Ber_Y\otimes \oo_Y\to \cc$ is a perfect pairing of finite-dimensional vector spaces, so we have canonical isomorphisms $\t{Distr}_Y = \Ber_Y, C^{-\infty}_Y = C^\infty_Y.$ Applying this argument fiberwise on the family $X \to X^{red}$ (with fibers $W_1$), we see that we have canonical isomorphisms of sheaves of vector spaces over $\uX = V_{0,\rr},$ and $C^{-\infty}_W\cong C^{-\infty}_{W_0}\otimes C^\infty \Pi V_1.$

Note that since differentiation preserves the class of compactly supported functions, any derivation $Q$ on $X$ induces a derivation on $\t{Distr}$ by $\langle Q\xi,f\rangle: = (-1)^{|Q||f|}\langle\xi,Qf\rangle.$ This is a right action of the Lie algebra, $TX$, or equivalently, an action of $TX^{op}$. We get a (left) action of $TX$ on $C^{-\infty}$ in a similar way.

Note that if a distribution $\xi$ is compactly supported, we can choose a compactly supported bump function $\tau$ equal to $1$ on the support of $\xi,$ and define $$\langle \xi, f\rangle: = \langle \xi, \tau f\rangle$$ for \emph{any}, not necessarily compactly supported function $f$. This is obviously independent of the bump function $\tau,$ and we obtain a canonical pairing between compactly supported distributions and \emph{all} functions. In particular we can define the \emph{integral} of a compactly supported distribution $\xi$ on $X$ by $$\int_X \xi : = \langle \xi,1\rangle.$$ 
We then have the familiar identity that a compaclty supported distribution which is a ``total derivative'' of another distribution has zero integral. In other words, if $\omega$ is a compactly supported distribution and $Q$ is a vector field (even or odd), then we have $$\int_X Q(\xi) = \pm\langle \xi, Q(1)\rangle = 0.$$

We will use the fact that (as in even geometry), generalized functions pull back along smooth maps. Namely, if $\pi:X\to Y$ is a smooth submersion of CS supermanifolds and $f$ is a generalized function on $Y,$ then we define a generalized function $\pi^*f$ on $X$ as follows. Recall that for any compactly supported volume form $\omega$ on $X$, we can integrate it along the fibers of $\pi$ to produce a new volume form $\pi_!\omega\in \Gamma(\Ber_Y).$ We then define $\pi^*f$ to be the form characterized by $\int_X \pi^*f\cdot \omega : = \int_Y f\cdot \pi_!\omega.$ 

The pullback map is compatible with differentiation, in the sense that if $Q$ is a differential on $Y$ and $\tilde{Q}$ is a lift of $Q$ to $X$ then $\tilde{Q}\pi^*f = \pi^*(Qf)$. 

\subsection{$\delta$ distributions and $\delta$ functions}
Let $V$ be any CS vector space and let $X = V_{CS}$ be the corresponding linear CS manifold. We have a canonically defined delta distribution $\delta_{\mathbf{0}}$ defined by the formula $$\langle \delta_{\mathbf{0}}, f\rangle = f(0).$$

If $\omega\in \sDet(V^*)$ is a constant Berezinian form, we have a generalized delta function $\delta_{\mathbf{0}}/\omega$ (unlike the delta distribution, the delta function is only canonical up to scalar). If $V = \rr^{0,n}$ is purely odd, then $C^{-\infty}(V_{CS}) = C^\infty(V_{CS}),$ so the delta function is an ordinary function. Choosing coordinates functions $\theta_i,$ we have the following important (and obvious) identity:
\begin{align}\label{distrpoint}\frac{\delta_{\mathbf{0}}}{\{\mid d\theta_1\cdots d\theta_n\}} = \theta_1\cdots \theta_n.\end{align}
In other words, when working with delta functions we can ``move factors of $\{:\theta_i\}$ in the denominator to factors of $\theta_i$ in the numerator''. More generally, suppose $V = V'\oplus V''$ is a direct sum decomposition. Write $X = V_{CS}^*, B = {V'}_{CS}^*, Y = {V''}_{CS}^*.$ Let $i:Y\to X$ be the inclusion and $\pi:X\to F$ be the quotient (so $Y\to X\to B$ is a fibration sequence). Note that elements of $\sDet(V), \sDet(V'), \sDet(V'')$ can be viewed as constant Berezinians on the corresponding spaces. We have a canonical isomorphism $\sDet(V) = \sDet(V')\otimes \sDet(V'')$. Now, given any Berezinian section $\omega''\in \Ber(Y),$ we can define a distribution $$\delta_{Y}\cdot \omega''\in \t{Distr}_X$$ with $\langle \delta_{Y}\cdot \omega'',f\rangle: = \int_Y (f\mid_Y)\cdot \omega''.$ We can view the symbol
$$\delta_{Y}\in \Gamma(Y,\sDet(N_{Y\subset X}^*))$$
formally as a ``relative Berezinian'' on $Y$, i.e., as a section of the determinant of the conormal bundle $N_{Y\subset X}^*$ of $Y$ in $X.$ Via this point of view, given an element $\kappa\in \sDet(V')^{-1},$ we can define the generalized function $\kappa\cdot \delta_{Y\subset X}\in C^{-\infty}(X)$ by $\langle \kappa, \omega\rangle : = \int_Y \omega\otimes\kappa$ for any compactly supported Berezinian section $\omega \in \Gamma_{cs}(X, \Ber)$ where we view $\omega\otimes \kappa\in \Gamma_{cs}(\uY,\Ber(Y))$ via the natural isomorphism 
$$\Ber(X)\mid_Y\otimes \sDet(V')^{-1}\cong C^\infty(X) \otimes \sDet(V)\otimes \sDet(V')^{-1}\cong \Ber(Y).$$

\subsection{The interpolating distribution $\sigma$ and the $(2,2)$-dimensional case}\label{sec:main_lin}

We now specialize to the $(2,2)$-dimensional case. Fix a character $\chi$ of the torus $T = \GQ^{red}$ and take $W = W_\chi,$ a $(2,2)$-dimensional $\GQ$-module. Write $X = W^*_{CS},$ the linear CS manifold associated to $W^*.$ From Section \ref{sec:2-dim_lin}, we have a basis of linear functions $z,\bar{z}, \theta, \bar{\theta}$ on $X = W^*,$ and a scalar $\lambda\in \cc$ such that $Q^2z = i\lambda z.$
Write $\partial, \bar{\partial}$ for the basis of even derivations on $X_\chi$ dual to $dz, d\bar{z}$ (i.e., determined by the formula 
\begin{align*}
\partial(z) = 1,\quad \bar{\partial}(z) = 0\\
\partial(z) = 0, \quad \bar{\partial}(\bar{z}) = 1, )
\end{align*}
and similarly for the basis of dual derivations $\partial_\theta, \bar{\partial}_\theta.$
Then (using the coordinate calculations in Section \ref{sec:2-dim_lin}) we obtain the formula
\begin{align}\label{formforQ} Q = \theta \partial + \bar{\theta}\bar{\partial} + i\lambda(z \partial_\theta - \bar{z}\bar{\partial}_\theta).
\end{align}

We identify its underlying space with $\cc\cong \rr^2,$ with real coordinates $x = \frac{z+\bar{z}}{2}, y = \frac{z-\bar{z}}{2i}$. The functions $z,\bar{z}$ can are linear complex-valued functions on $\cc$; by a standard abuse of notation, we use the same letters to denote their pullbacks to $X_\chi$ via the linear projection $X_\chi\to \cc = X_\chi^{red}$. It is a standard result in analysis that we can view $$\frac{1}{z}$$ as a generalized function on $\cc$, defined as the $\epsilon\to 0$ limit of the continuous functions $\frac{1}{z} (1-\chi_{D_\epsilon})$ (for $\chi_{D_\epsilon}$ the characteristic function of a disk). We have the following standard formula from harmonic analysis, working over the even manifold $\cc = X_\chi^{red}$: 
\begin{align}\label{polediff}
\bar{\partial}\frac{1}{z} = -2\pi i\frac{\delta_0}{dzd\bar{z}}.
\end{align}
(Equivalent to $\bar{\partial}\frac{1}{z} = \pi\frac{\delta_0}{dxdy}.$) Here $\bar{\partial}$ is the complex derivation characterized by $\bar{\partial}z = 0, \bar{\partial}\bar{z} = 1.$ The projection $X_\sigma\to \rr^2$ lift $1/z$ to a function on $X_\sigma$ (and hence also on $X$). Now, working over the supermanifold $X_\chi,$ write
\begin{align}\label{sigmaeq}
\sigma(z): = \frac{\theta}{i\lambda z},
\end{align}
a function on $X_\chi$ (which also lifts to $X$). Then via a computation we obtain the following crucial equation.
\begin{lm}\label{lm:interp_eq}
We have $$Q\sigma = 1 - \frac{2\pi}{\lambda}\frac{\delta_{\mathbf{0}}}{\{dz d\bar{z}\mid d\theta d\bar{\theta}\}}.$$
\end{lm}
\begin{proof}
We simply combine formulas (\ref{formforQ}) and (\ref{polediff}) to perform the following calculation,
\begin{align} 
Q\frac{\theta}{z} = \left(\theta \partial + \bar{\theta}\bar{\partial} + i\lambda(z \partial_\theta - \bar{z}\bar{\partial}_\theta)\right)\frac{\theta}{z} &= \\
\theta^2(\cdots) -2\pi i \theta\bar{\theta} \frac{\delta^{red}}{dzd\bar{z}} + i\lambda(1 - 0) = i \lambda - 2\pi i \frac{\delta_{z = 0}}{dzd\bar{z}},
\end{align}
which we then divide by $i\lambda.$

\end{proof}

Lemma \ref{lm:interp_eq} implies that
$$\lambda - 2\pi \frac{\delta_0}{\lambda dzd\bar{z}}$$
is a total $Q$-derivative of a function, and so if $\omega$ is a compactly supported $Q$-equivariant Berezinian (either on $X_\sigma$ or on $X$ itself), then
$$(\lambda-2\pi \frac{\delta_{z = 0}}{\lambda dzd\bar{z}})\omega = Q(\sigma\omega)$$
has zero integral. 

In particular, we deduce that $$\lambda\int \omega = 2\pi \int \frac{\omega}{dzd\bar{z}}
$$
This implies the localization theorem for the $(2,2)$-dimensional space $X_\chi$. But since generalized functions lift along smooth maps, we can lift the equation in Lemma \ref{lm:interp_eq} from $\rr^{2,2}$ to all of $W_{CS}$ to obtain the induction step in the proof of theorem \ref{thm:linear_calc}.

\subsection{The induction step}
Let $W = \prod_{i=0}^{n/2-1} W_i$ be a decomposition of $W$ as a product of $(2,2)$-dimensional CS representations, with coordinates $z_i, \bar{z}_i, \theta_i, \bar{\theta}_i$ as in \ref{sec:2-dim_lin} Let $W_{>0} = \prod_{i\ge 1} W_i\subset W$ be the locus where $z_0=\bar{z}_0=\theta_0=\bar{\theta}_0=0.$ We view $W_0$ with coordinates $z_0,\bar{z}_0, \theta_0, \bar{\theta}_0$ as a quotient of $W$ (with fiber $W_{>0}$ at $0$). We can thus view any generalized functions $f$ on $W_0$ as a generalized function on $W$ (that only depends on the first $(2,2)$ coordinates). When there is no chance of confusion, we will denote a function and its pullback by the same letter. Write $X = W^*_{CS}$ for the affine space we are working on and $Y = W_{>0 CS}^*$ for the codimension $(2,2)$ subspace that maps to $0\in \rr^{2,2}$ under projection to the $0$th coordinate.
From pulling back the formula in Lemma \ref{lm:interp_eq}, we have
\begin{align}
\int_X \omega &= \int_X \left(\omega - Q (\sigma \omega)\right) \\ &= \frac{2\pi}{\lambda}\int_X \frac{\theta\bar{\theta}\pi^*\delta_{\mathbf{0}, W_0}}{dzd\bar{z}} \omega\\
&= \frac{2\pi}{\lambda}\int_Y \frac{\omega}{dzd\bar{z}\mid d\theta d\bar{\theta}} = \frac{2\pi}{\lambda}\Loc_{W_{>0}}(\frac{\omega\mid_{\mathbf{0}}}{dzd\bar{z}}\mid_0) \label{line3} \\
&=\Loc_{W_0}\{dzd\bar{z}\mid d\theta d\bar{\theta}\}\Loc_{W_{>0}}\frac{\omega\mid_{\mathbf{0}}}{dzd\bar{z}{d\theta d\bar{\theta}}} = \Loc_W\omega\mid_{\mathbf{0}}\label{line4},
\end{align}
where equation \ref{line3} follows from the induction hypothesis and equation \ref{line4} follows from the multiplicativity of Loc, Proposition \ref{loc_multiplicativity}. This concludes the proof of Theorem \ref{thm:linear_calc}. \qed


\section{Reduction to linear case}\label{sec:beta}
In this section we reduce the general case of Theorem \ref{thm:main_precise} to the local calculation of the previous section. The main component of the proof is the acyclicity lemma, Lemma \ref{lm:acyc}, which will imply that the localization theorem holds for Berezinians supported outside of $X^Q.$ 

\subsection{Proof of the acyclicity lemma} \label{acyc_proof}
We give a reminder of the statement of the lemma.
\begin{clemma}[$Q$-acyclicity, Lemma \ref{lm:acyc}]
Suppose $X$ is a $\GQ$-equivariant CS manifold and $Q$ is everywhere nonvanishing. Then there exists an odd function $\beta \in C^\infty(X)_1$ such that $Q\beta = 1.$
\end{clemma}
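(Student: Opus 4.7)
The plan is to construct $\beta$ by combining local solutions via a $T$-equivariant partition of unity, followed by a single algebraic correction step that exploits the fact that the patching error is simultaneously $Q$-closed and nilpotent.

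First I would handle local existence. For any $p\in \uX$, nonvanishing of $Q$ at $p$ means $Q|_p$ is a nonzero odd tangent vector, so in local coordinates near $p$ there is an odd coordinate $\theta_1$ with $Q\theta_1 = a_1$ invertible on a neighborhood of $p$. Averaging $\theta_1/a_1$ over the compact torus $T = \GQ^{red}$, which commutes with $Q$ (centrality of $\g_0$ forces $T$ to act trivially on $\g_1 = \cc Q$), produces a $T$-invariant odd function $\beta_p^{(0)}$ on a $T$-invariant neighborhood satisfying $Q\beta_p^{(0)} = 1 + \epsilon_p$, where the error $\epsilon_p$ lies in the square of the ideal generated by odd functions.

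Next, I would promote this approximate local solution to an exact one by an algebraic inversion. Because $\beta_p^{(0)}$ is $T$-invariant and $Q^2 \in \tlie_\cc$ annihilates $T$-invariants, $Q\epsilon_p = Q^2\beta_p^{(0)} = 0$; and because $X$ has finite odd rank, the ideal generated by odd functions is nilpotent, whence $\epsilon_p$ is nilpotent. Consequently $(1+\epsilon_p)^{-1}$ is a well-defined finite sum, $Q((1+\epsilon_p)^{-1}) = -(1+\epsilon_p)^{-2}Q\epsilon_p = 0$, and $\beta_p := (1+\epsilon_p)^{-1}\beta_p^{(0)}$ satisfies $Q\beta_p = 1$ exactly on its $T$-invariant domain $U_p$.

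Finally I would patch globally via a $T$-invariant partition of unity $\{\rho_\alpha\}$ subordinate to a $T$-invariant cover $\{U_\alpha\}$ with exact local solutions $\beta_\alpha$. Setting $\beta_0 := \sum_\alpha \rho_\alpha\beta_\alpha$ gives $Q\beta_0 = 1 + \epsilon$ with $\epsilon := \sum_\alpha (Q\rho_\alpha)\beta_\alpha$, and the same two facts hold automatically: $Q\epsilon = 0$ because $\sum_\alpha (Q^2\rho_\alpha)\beta_\alpha = 0$ by $T$-invariance of the $\rho_\alpha$ and because $\sum_\alpha(Q\rho_\alpha)(Q\beta_\alpha) = Q(\sum_\alpha\rho_\alpha) = 0$; and $\epsilon$ is nilpotent as a sum of products of pairs of odd functions. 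Thus $\beta := (1+\epsilon)^{-1}\beta_0$ is the desired global solution. The main obstacle, and the central insight, is recognizing that the naive patching error automatically sits in the nilpotent $Q$-closed ideal, so that it can be removed by a finite geometric series rather than by an iterative analytic or sheaf-cohomological argument; both properties rely crucially on $T$-invariance of the partition of unity (which kills $Q^2\rho_\alpha$) and on the super-commutative structure of $C^\infty(X)$ (which ensures odd products are nilpotent).
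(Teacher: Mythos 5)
Your core mechanism is sound and is in fact the same one the paper uses: produce a $\beta_0$ with $Q\beta_0=1+\epsilon$ where $\epsilon$ is nilpotent and $Q$-closed because $\beta_0$ is $T$-invariant and $Q^2$ lies in the complexified Lie algebra of $T=\GQ^{red}$, then divide by the invertible even function $1+\epsilon$; the paper's closed form $\beta=[\alpha]/Q[\alpha]$ is exactly your geometric-series correction. Your global patching computation is also correct, including the cancellation $\sum_\alpha(Q\rho_\alpha)(Q\beta_\alpha)=Q(1)=0$ and the vanishing of $\sum_\alpha(Q^2\rho_\alpha)\beta_\alpha$ for a $T$-invariant partition of unity.

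The gap is in the local step. The function $\theta_1/a_1$ lives only on a coordinate chart $U_p$, and a chart has no reason to be $T$-invariant or even to contain the orbit $T\cdot p$ when $p$ is not a $T$-fixed point. The average $\int_T t^*(\theta_1/a_1)\,dt$ is undefined at any point whose $T$-orbit leaves $U_p$ --- in particular possibly at $p$ itself --- so you do not actually obtain a $T$-invariant $\beta_p^{(0)}$ on a $T$-invariant neighborhood of $p$. Without the averaging you cannot conclude $Q\epsilon_p=Q^2\beta_p^{(0)}=0$, and the correction $\beta\mapsto(1+\epsilon)^{-1}\beta$ need not improve matters when $Q\epsilon\neq 0$ (the new error $(1+\epsilon)^{-2}(Q\epsilon)\beta$ again lies only in $J^2$, for $J$ the ideal generated by odd functions), so the exact local solutions that your patching step requires are never constructed. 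The repair is to reorder the operations as the paper does: first patch the raw chart-local approximate solutions with an \emph{arbitrary} partition of unity --- the resulting error $\sum_\alpha(Q\rho_\alpha)\beta_\alpha^{(0)}+\sum_\alpha\rho_\alpha\epsilon_\alpha$ automatically lies in $J^2$, so the global function $\alpha$ still satisfies $(Q\alpha)^{red}=1$ --- then $T$-average the already-global function $\alpha$ (now unproblematic, and compatible with reduction, so $Q[\alpha]$ is still invertible with $Q^2[\alpha]=0$), and apply your division trick once. This eliminates both the need for exact local solutions and the need for a $T$-invariant cover.
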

\begin{proof} 
Our proof closely follows the proof in \cite{schwarz-zaboronsky} (with the exception that we replace an argument involving a choice of metric by Lemma \ref{lm:dred} below ). We first construct an odd function $\alpha$ on $X$ which satisfies the condition up to nilpotents, $(Q\alpha)^{red} = 1.$ 

Let $T^{red}_1$ be the odd component of $T^{red},$ i.e., the normal bundle to $X^{red}$ in $X$. 

Let $\Omega^{red}$ be the restriction of the cotangent bundle $\Omega$ to $X^{red}$ and let $\Omega^{red}_1$ be its odd part, which is canonically isomorphic to the conormal bundle $N_{X/X^{red}}^*$. Let $\rho:\Gamma(\uX,\Omega)_1\to \Gamma(\uX, \Omega^{red}_1)$ be the restriction map, and let $$d^{red}_1:= \rho\circ d:\Gamma(\uX, C^\infty_X)\to \Gamma(\uX, \Omega^{red}_1)$$ be the composition, that takes an odd function to its differential in the normal directions.
\begin{lm}\label{lm:dred}
The restriction map $d^{red}_1:\Gamma(\uX, C^\infty_X)_1\to \Gamma(\uX, \Omega^{red}_1)$ is surjective.
\end{lm}
\begin{proof}
This result is obvious for $X = \rr^{m\mid n},$ hence true locally. Let $\{\uU_i\}_{i\in I}$ be a \v{C}ech cover of $\uX$ such that the lemma is true on each $\uU_i$ (any \v{C}ech refinement of a cover by $\rr^{m\mid n}$ will do). Let $\{\tau_i\}_{i\in I}$ be a partition of unity with $\tau_i$ supported on $U_i.$ For $\phi\in \Gamma(\uX, \Omega^{red}_1),$ let $f_i\in \Gamma(\uU_i, C^\infty_X)$ be odd functions with $d^{red}_1f_i = \phi\mid_{U_i}.$ Then $$f:=\sum \tau_if_i$$ is a locally finite sum of compactly supported functions, hence defines a global odd function on $X$. Since $d^{red}_1$ commutes with multiplication by even functions, we have $d^{red} f = \phi.$
\end{proof}
\begin{rmk}
This lemma implies that any CS manifold is (non-canonically) isomorphic to the total space of the bundle $T^{red}_1$ on $X^{red},$ a well-known result in the real case.
\end{rmk}
Now convolving with $Q^{red}\in T^{red}_1$ gives a linear map of vector bundles $*_Q^{red}:\Omega^{red}_1\to C^\infty_{X^{red}}$ on the even manifold $X^{red},$ which is surjective since $Q$ was assumed nowhere vanishing. Surjective maps of $C^\infty$ vector bundles split, so we can choose a preimage $\phi$ with $*_Q^{red}(\phi) = 1.$ Using Lemma \ref{lm:dred}, we choose a lift $\alpha\in C^\infty(X)_1$ with $d^{red}\alpha = \phi.$ By construction, we then have $$(Q\alpha)^{red} = Q^{red}*d^{red}\alpha = 1.$$

Now write $[\alpha]$ for the $T$-average of $\alpha.$ Then $[\alpha]$ is a $T$-equivariant function, and since $Q$ is $T$-equivariant we have $Q[\alpha]^{red} = [1] = 1.$ In particular, we see that $Q[\alpha]$ is invertible. Write 
$$\beta:=\frac{[\alpha]}{Q[\alpha]}.$$
We compute: $Q\beta = \frac{Q[\alpha]}{Q[\alpha]} + \frac{Q^2[\alpha]}{\ldots} = 1,$ since $Q^2[\alpha] = 0$ by $T$-equivariance of the averaged function $[\alpha].$ 
  \end{proof}

\subsection{Reduction to a neigbhorhood}
We write down the following corollary, which implies that the localization theorem reduces to a neighborhood of the fixed points. 
\begin{cor}\label{cor:reduction_to_loc}
Suppose $X$ is a $\GQ$-manifold with $X^Q$ supported at the dimension-zero submanifold $\uY\subset \uX,$ and suppose that the localization theorem, Theorem \ref{thm:main} holds for $U = \uX, C^\infty_X\mid_{\uU}$ for $\uU$ some open $T$-equivariant neighborhood $\uU\supset \uY$ of $\uY$ in $\uX$ (note that since $\GQ$ is a nilpotent thickening of $T$ and $U$ is open, $\GQ$ automatically acts on $U$). Then the localization theorem holds for $X$. 
\end{cor}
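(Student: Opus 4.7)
The plan is to split $\omega$ via a cutoff function into a piece supported inside $\uU$ (where the hypothesis applies directly) and a piece supported away from $\uY$ (which is killed by the acyclicity lemma). The crucial subtlety is that a naive $T$-invariant bump function pulled back from $\uX$ is not $\GQ$-invariant, so the naive decomposition does not respect $\GQ$-equivariance. The heart of the argument is therefore to construct a genuinely $\GQ$-invariant cutoff.

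To produce such a cutoff $\tau'$, I would first choose nested $T$-invariant open neighborhoods $\uY \subset \uU_0 \subset \overline{\uU_0} \subset \uU_1 \subset \overline{\uU_1} \subset \uU$ together with a $T$-invariant smooth function $\tau$ on $\uX$ equal to $1$ on an open neighborhood of $\overline{\uU_0}$ and supported in $\uU_1$ (standard, using compactness of $T$ to average). On the $T$-invariant open set $V := \uX \setminus \overline{\uU_0}$, which contains no $Q$-fixed points, Lemma \ref{lm:acyc} produces an odd $T$-equivariant $\beta \in C^\infty(V)_1$ with $Q\beta = 1$. Because $Q\tau$ has support compactly contained in $V$, the product $\eta := \beta\cdot Q\tau$ extends by zero to a smooth even function on $X$. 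Set $\tau' := \tau - \eta$. The key calculation, applying the super-Leibniz rule,
\begin{align*}
Q\tau' &= Q\tau - Q(\beta\cdot Q\tau) \\
&= Q\tau - \bigl(Q\beta\cdot Q\tau - \beta\cdot Q^2\tau\bigr) \\
&= \beta\cdot Q^2\tau = 0,
\end{align*}
uses $Q\beta = 1$ together with $Q^2\tau = 0$, the latter because $Q^2 \in \glie_0 = \mathrm{Lie}(T)$ acts trivially on the $T$-invariant function $\tau$. Combined with the obvious observations that $\tau'$ is $T$-invariant (since $\tau$ and $\beta$ are), equals $1$ near $\uY$ (where $\eta = 0$), and is supported in $\overline{\uU_1} \subset \uU$, this produces the desired $\GQ$-invariant cutoff.

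Given $\tau'$, I would decompose $\omega = \tau'\omega + (1-\tau')\omega$; both summands are compactly supported $\GQ$-equivariant Berezinians. Applying the hypothesis to $\tau'\omega|_\uU$ yields $\int_X \tau'\omega = \sum_{x\in \uY}\Loc_{T_x^*X}(\tau'\omega)|_x = \sum_{x\in X^Q}\Loc_{T_x^*X}\omega|_x$, using that $\tau'|_x = 1$ at each fixed point and that $\uU^Q = \uY = X^Q$. For the remaining piece, $(1-\tau')\omega$ is supported in a $T$-invariant open set on which $Q$ has no fixed points; a second application of Lemma \ref{lm:acyc} there produces an odd $\beta''$ with $Q\beta'' = 1$, and then
\[
(1-\tau')\omega = (1-\tau')\cdot Q(\beta''\omega) = Q\bigl((1-\tau')\beta''\omega\bigr),
\]
using $Q\tau' = 0$ and $Q\omega = 0$. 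The bracketed quantity extends by zero to a compactly supported odd Berezinian on $X$, so its integral vanishes. Adding the two contributions gives the desired formula.

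The main obstacle is the construction of the $\GQ$-invariant cutoff $\tau'$; everything else is essentially bookkeeping. The design of the correction $\eta = \beta\cdot Q\tau$ is forced by the requirement $Q\tau' = 0$, and it works cleanly only because the structural identity $Q^2 \in \mathrm{Lie}(T)$ built into the definition of a $Q$-group guarantees $Q^2\tau = 0$ automatically for any $T$-invariant $\tau$.
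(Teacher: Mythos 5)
Your argument is correct and, after unwinding, is computationally the same as the paper's: the paper forms $\omega_{\uU} := \omega - Q(\omega\,\beta_{V'}(1-\tau))$, and expanding the super-Leibniz rule using $Q\omega = 0$ shows $\omega_{\uU} = (\tau - \beta_{V'}Q\tau)\,\omega$, which is precisely your $\tau'\omega$. The only real difference is framing: you isolate the intermediate object $\tau' = \tau - \beta\,Q\tau$ as an explicitly $\GQ$-invariant (in particular $Q$-closed) cutoff and verify $Q\tau' = 0$ directly, whereas the paper modifies $\omega$ at once and deduces $\GQ$-equivariance of $\omega_{\uU}$ from the fact that $\omega - \omega_{\uU}$ is a $Q$-derivative of something $T$-equivariant; both hinge on the same identities $Q\beta = 1$, $Q^2\tau = 0$, and the acyclicity lemma. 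Your reformulation is a pleasant conceptual gloss (a merely $T$-invariant bump can always be corrected by the nilpotent term $\beta\,Q\tau$ to a genuinely $\GQ$-invariant one), but it is not a different route. One small point worth making explicit in either version: the $\beta$ furnished by Lemma \ref{lm:acyc} is $T$-equivariant because it is built as $[\alpha]/Q[\alpha]$ from the $T$-average $[\alpha]$; the lemma statement does not assert this, but both your proof and the paper's rely on it.
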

\begin{proof}
Let $\tau$ be a ``bump function,'' such that $\tau$ has support inside $\uU$ and is equal to $1$ in a smaller neighborhood $\uV\supset \uU$ of $\uY$ (it is standard that such exist). By taking the $T$-average, we can assume WLOG that $\tau$ is $T$-equivariant (hence also $Q^2$-equivariant). Let $\uV'$ be a $T$-equivariant neighborhood such that $\uV\cup \uV' = \uX$ but $\uV'$ does not intersect $\uY.$ By the $Q$-acyclicity theorem, there is a function $\beta_{V'}$ on $V'$ with $Q\beta_{V'} = 1$ on $V'.$ Write $\beta' := \beta_{V'}\cdot \tau'.$ This function is zero on $\uV\cap \uV',$ hence can be extended (by zero) to a function on all of $X = \uV\cup \uV',$ which we also call $\beta'.$ Now outside $\uU,$ we have $\tau = 0$ so $\tau' = 1,$ and thus $\beta'\mid_{\uX\setminus \uU} = \beta\mid_{\uX\setminus \uU}$. Now suppose $\omega$ is a $\GQ$-equivariant Berezinian. Write $\omega_{\uU} : = \omega - Q(\omega\beta').$ Since they differ by a total derivative, $$\int \omega_{\uU} = \int \omega.$$ Now in the complement of $\uU,$ we have $Q\beta' = 1,$ so $$\omega_{\uU}\mid_{\uX\setminus \uU} = 0.$$ On the other hand, inside $\uV,$ we have $\tau' = 0,$ so $\beta' = 0$ and $$\omega_{\uU}\mid_{\uV} = \omega\mid_{\uV}.$$ Since $\uV$ contains the fixed point locus $\uY$, this implies that the local contributions $$\sum_{x\in X^Q} \Loc_{T_x^* X}\omega\mid_{\{x\}} = \sum_{x\in X^Q} \Loc_{T_x^* X}\omega_{\uU}\mid_{\{x\}}$$ of $\omega$ and $\omega_{\uU}$ are equal. Now by construction, $\omega_{\uU}$ is supported on $\uU$ and $\GQ$-equivariant (this because $\omega-\omega_{\uU} = Q(\omega\beta')$ is the $Q$-derivative of something $T$-equivariant, thus is both $T$-equivariant and $Q$-equivariant). Thus if the localization theorem holds for the manifold $U$, we must have $$\sum_{x\in X^Q} \Loc_{T_x^* X}\omega\mid_{\{x\}} = \int\omega_{\uU},$$ and we deduce the same result holds for $\omega.$
\end{proof}

\subsection{Local linearity} 
With Corollary \ref{cor:reduction_to_loc} and Theorem \ref{thm:linear_calc} in hand, it remains only to prove that every $Q$-fixed point of $X$ (the $\GQ$-manifold in Theorem \ref{thm:main}) has a neighborhood isomorphic as a CS $\GQ$-manifold to a neighborhood of $\mathbf{0}$ in a linear CS $\GQ$-representation. 

We give a brief reminder of the implicit function theorem for CS manifolds. Suppose $X$ is a CS manifold and $\mathbf{x}$ is a point. Let $W$ be a CS super-vector space with $W^*_{CS}$ the CS space associated to the dual vector space, with underlying manifold $W^*_{0,\rr}$. Let $f:X\to W^*_{CS}$ be a map of CS vector spaces such that $\mathbf{x}\mapsto 0$. Then pullbacks of linear functions on $W^*_{CS}$ give a map $$L = f^*\mid_{W\subset C^*(W^*)}:W\to C^\infty X.$$ Note that $L$ encodes all the information about the map. In particular, the map on underlying topological spaces is given by $x\mapsto L_x$ for $L_x\in W^*$ the functional $L_x:w\mapsto L(w)(x).$ Since odd functions vanish at points, $L_x\in W_0^*$ is even and furthermore $L_x\in W_{0,\rr},$ as $L_x$ must correspond to a topological point of the underlying manifold $\uW^* = \uW_{0,\rr}^*.$ We show in the following proposition that, conversely, a linear function $W\to C^\infty X$ that takes $W_{0,\rr}$ to functions with real values at all points uniquely determines a map $X\to W^*_{CS},$ and that moreover, this correspondence satisfies an implicit function theorem.

\begin{prop}\label{xcvb}
Suppose $L:W\to C^\infty(X)$ is a linear map such that $L(w)(\mathbf{x}) = 0$ vanishes on the marked point for all $w$ and such that $\pi_{red}\circ L\mid_{W_0}:W_0\to C^\infty(X)^{red}$ is compatible with real structure (for $\pi_{red}$ the projection map to the reduced part). 
\begin{enumerate}
\item\label{xcvb1} There is a unique map of CS manifolds $f:X\to W^*_{CS}$ with $f^*(w) = L(w)$ for any $w\in W$ viewed as a linear function on $W^*_{CS}$. 
\item\label{xcvb2} Let $d_x\circ L: W\to \Omega^1_{X,\mathbf{x}}$ be the composition of $W$ with the differential at $\mathbf{x}.$ Then $f$ is a local isomorphism near $\mathbf{x}$ if and only if $d_x\circ L$ is an isomorphism.
\end{enumerate}
\end{prop}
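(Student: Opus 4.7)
For part (\ref{xcvb1}), uniqueness is immediate: any $f$ satisfying the conclusion is determined by $f^*$ on a set of generators of the sheaf $C^\infty_{W^*_{CS}}$, which is produced from the linear subspace $W$ by smooth functional calculus on even linear functions together with Grassmann products in the odd linear functions. For existence I proceed in two steps. Fix an $\rr$-basis $e_1,\ldots,e_m$ of $W_{0,\rr}$ and a basis $\epsilon_1,\ldots,\epsilon_n$ of $\Pi W_1$, and set $y_i:=L(e_i)\in C^\infty(X)_0$ and $\eta_j:=L(\epsilon_j)\in C^\infty(X)_1$. The real-structure hypothesis says that $y_i^{red}$ is a real-valued smooth function on $\uX$, so the $y_i^{red}$ assemble into a smooth map $\underline{f}\colon \uX \to (W_0^*)_\rr$ sending $\mathbf{x}$ to $0$.

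I then extend $\underline{f}$ to a morphism of CS ringed spaces $f\colon X\to W^*_{CS}$ by constructing a sheaf map $f^*$. Writing $y_i = \underline{f}^*(x_i) + n_i$, where $n_i\in C^\infty(X)_0$ lies in the nilpotent ideal generated by products of odd functions, I pull back a smooth $\cc$-valued function $g$ on an open of $(W_0^*)_\rr$ by the (finite, by nilpotency) Taylor expansion
$$ f^*(g) \;:=\; \sum_{\alpha} \frac{(\partial^\alpha g)\circ\underline{f}}{\alpha!}\, n_1^{\alpha_1}\cdots n_m^{\alpha_m}, $$
and the odd linear coordinates by $f^*\theta_j := \eta_j$. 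One verifies that this is a well-defined $\cc$-superalgebra homomorphism, independent of the chosen bases and compatible with restriction, and that $f^*(w) = L(w)$ for every $w\in W$. This verification is the main technical point, though it is entirely routine super-geometric bookkeeping along the same lines as the definition of the structure sheaf of $W^*_{CS}$ itself.

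For part (\ref{xcvb2}), the cotangent space $\Omega^1_{0,W^*_{CS}}$ is canonically $W$, and under this identification the cotangent-level pullback $d_\mathbf{x}(f^*)$ agrees with $d_\mathbf{x}\circ L$; the equivalence in the statement is thus an inverse function theorem for CS manifolds. I prove the nontrivial direction by splitting into even and odd parts. The even component of $d_\mathbf{x}\circ L$ being an isomorphism of even cotangents is, after passing to the real structure on $T^{red}_0$, the statement that the real differential of $\underline{f}$ at $\mathbf{x}$ is a linear isomorphism, so the classical inverse function theorem implies $\underline{f}$ is a local diffeomorphism in a neighborhood of $\mathbf{x}$. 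The odd component being an isomorphism says the $\eta_j$ restrict to a basis of the conormal fiber $N^*_{X/X^{red}}|_\mathbf{x}$, and hence form a local frame of the odd conormal bundle on a neighborhood of $\mathbf{x}$ in $\uX$; combined with the local diffeomorphism on underlying spaces, this presents $f$ as a CS local isomorphism near $\mathbf{x}$.
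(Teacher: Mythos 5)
Your proof is correct and follows essentially the same outline as the paper's (use the real-structure hypothesis to produce $\underline{f}$, then treat even and odd parts separately), but the two differ in emphasis at each step. For part (\ref{xcvb1}), you take the \emph{charts} route, constructing $f^*$ explicitly by Taylor expansion in the nilpotent shifts $n_i$; the paper mentions this option but does not spell it out, and instead gives a chart-free argument that first uses the fact that $C^\infty_\rr$ corepresents smooth maps to $\rr$ to obtain $f^{red}$, then invokes the product decomposition $W^*_{CS}=W^*_{0,CS}\times\Pi(W^*_1)_{CS}$ to handle the odd coordinates. Both are valid: yours is more concrete at the price of a basis-independence check (which you flag as routine), theirs is shorter but leans on general formalism.

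For part (\ref{xcvb2}), both proofs begin identically: the even part of $d_{\mathbf x}\circ L$, via the real structure, is the differential of $\underline f$, so the classical inverse function theorem gives that $\underline f$ is a local diffeomorphism; and the odd part gives that the $\eta_j$ form a local frame of $N^*_{X/X^{red}}$. They differ only in the final deduction. The paper finishes by filtering both structure sheaves by powers of the nilpotent ideal and checking that $f^*$ induces isomorphisms on associated graded pieces (which are symmetric powers of the odd conormal isomorphism), concluding formally that $f^*$ is an isomorphism. You instead assert more tersely that the local diffeomorphism on $\uX$ together with the odd frame ``presents $f$ as a CS local isomorphism.'' That step is correct---it is the content of the structure theorem that a CS manifold is locally the total space of the bundle $\Pi N_{X/X^{red}}$, as remarked after Lemma~\ref{lm:dred}---but as written it is a bit compressed, and it would be worth either citing that structure theorem explicitly or reproducing the one-line filtration argument to make the deduction airtight.
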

\begin{proof}
Part \ref{xcvb1} follows immediately from the interpretation of CS manifolds in terms of charts. Alternatively, let $L_{red,\rr}:W_{0}\to C^\infty(X)_{red}$ be the map on even, reduced functions. This is a map in the world of (real) differential geometry, and since it is compatible with real structures, it corresponds to a unique map of real manifolds $f^{red}:X^{red}\to W^*_{0,\rr}$ (since in the world of real manifolds, $C^\infty_\rr$ classifies smooth maps to $\rr^1$). Now the proposition follows from the product identity $W^*_{CS} = W^*_{0,CS}\times \Pi(W^*_1)_{CS}$ in the category of CS manifolds. 

For part \ref{xcvb2}, let $f:X\to W^*_{CS}$ be the map of CS manifolds corresponding to the linear map $L:W\to C^\infty(X)$. Let $df: T^1_{\mathbf{x}}X\to T^1_{\mathbf{0}}W^*_{CS}$ be the induced map on tangent spaces. The map $df$ is evidently adjoint to $d_x\circ L,$ and so one is an isomorphism if and only if the other is. In particular if $f$ is a local isomorphism at $\mathbf{x}$, then the induced map on tangent spaces $df\mid_{\mathbf{x}}$ must be an isomorphism as well. It remains to show the converse. Suppose $df$ is an isomorphism at $\mathbf{x}$. Then it must be an isomorphism in a neigborhood of $\mathbf{x},$ so since we are working locally, we can assume WLOG that it is an isomorhpism on all $X$. Then ordinary differential geometry implies that $f^{red}:X^{red}\to W^*_{0,\rr}$ is a local isomorphism. The induced map on the normal bundles of $X^{red}\subset X$ is now also an isomorphism. If we filter both the sheaves $C^\infty(X)$ and $C^\infty(W^*_{CS})$ by powers of the nilpotent ideal, the map of ringed spaces is compatible with these filtrations and induces an isomorphism of associated graded components (which are powers of the induced map on normal bundles). Thus $f$ is a local isomorphism. 
\end{proof}

Now suppose that $\mathbf{x}\in \uX$ is an isolated fixed point of a $\GQ$-action on $X$ with nondegenerate action. 
\begin{lm}\label{lm:local_linearity}
There is a $\GQ$-equivariant map of CS manifolds $f:X\to (T_{\mathbf{x}, X})_{CS}$ sending $\mathbf{x}$ to $0,$ which is a local isomorphism.
\end{lm}
\begin{proof}
Let $$W := T^*_{\mathbf{x}, X}$$ with induced CS structure, and let $I_x\subset C^\infty_X$ be the ideal of functions on $X$ that vanish on $\mathbf{x}.$ We have a map $D:I_x\to W$ (differentiating at $x$) which is $\GQ$-equivariant and surjective. Let $D_0:I_{x,0}\to W_0$ be its even component, which is surjective and equivariant with respect to the even part $T\subset \GQ$. Furthermore, $D_0$ factors through the reduced restriction $I_{x,0}\to I_{x,0}^{red} (= I_{x,0}\cap C^{\infty,red}),$ and the resulting map $D_0^{red}: I_{x,0}^{red}\to W_0$ is compatible with real structure. 

Choose an arbitrary \emph{real} splitting of the surjection $D_0^{red}: I_{x,0}^{red}\to W_0,$ then average with respect to the compact group $T$ to get a $T$-equivariant splitting $$L_0^{red}: W_0\to I_{x,0}^{red}$$ such that $D_0\circ L_0 = \t{Id}_{W_0}$ and $L_0^{red}$ is compatible with real structure. Repeating this argument once more for the surjection $I_{x,0}\to I_{x,0}^{red}$ (now without real structure), we get a further splitting $$L_0: W_0\to I_{x,0}^{red}$$ such that $\pi_{red}\circ L_0 = L_0^{red}: W\to I^{red}$ (and therefore, also, $L_0\circ D_0 = \t{Id}_{W_0}$.) 

Now by nondegeneracy, the action of $Q$ defines an isomorphism $Q:W_0\to W_1.$ Write $L_1: = Q\circ L_0\circ Q^{-1}: L_1\to I_{x,1}.$ Let $L = L_0\oplus L_1:W\to I_x$ be the linear map with even component $L_0$ and odd component $L_1.$ Both components are $T$-equivariant, and by construction, $L$ commutes with $Q$. Thus $L$ is a $\GQ$-equivariant splitting of $D:I_x\to W.$ Finally, because (by construction), the map $L_0^{red} = \pi_{red}\circ L_0$ is real, we see that $L$ satisfies the conditions of Proposition \ref{xcvb}, and thus corresponds to a function $f: X\to W^*_{CS}$ with pullback of linear functions satisfying $f^*(w) = L(w);$ since $L$ is $\GQ$-equivariant, $f$ is as well. Now since $L_0$ splits the surjection $D:I_x\to T^*_x,$ we see that $d_{\mathbf{x}}f$ is an isomorphism, so by the implicit function theorem $f$ is a local isomorphism near $\mathbf{x}.$ 
\end{proof}

\begin{cor}
The localization theorem holds for some open neighborhood of $X^Q$ in $X.$
\end{cor}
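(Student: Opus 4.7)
The plan is to construct the promised neighborhood $U$ by gluing together local linear models around each fixed point, and then reduce the localization integral over $U$ to a sum of contributions each of which is governed by Theorem \ref{thm:linear_calc}.

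First, I apply Lemma \ref{lm:local_linearity} at each $\mathbf{x}\in X^Q$, producing a $\GQ$-equivariant local isomorphism $f_\mathbf{x}$ from a neighborhood of $\mathbf{x}$ in $X$ onto an open subset of the linear CS $\GQ$-representation $(T_{\mathbf{x}}X)_{CS}$, sending $\mathbf{x}$ to $\mathbf{0}$. Each fixed point is automatically $T$-fixed (since it lies in the $T$-invariant discrete set $\uX^Q$ and $T$ is connected), so by compactness of $T$ I can shrink the domain of $f_\mathbf{x}$ to a $T$-stable, hence $\GQ$-stable, open neighborhood $U_\mathbf{x}$ of $\mathbf{x}$ on which $f_\mathbf{x}$ remains an equivariant local isomorphism. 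Since $\uX^Q$ is discrete and closed in the Hausdorff space $\uX$, I can further shrink so that the $U_\mathbf{x}$ are pairwise disjoint. Set
$$U := \bigsqcup_{\mathbf{x}\in X^Q} U_\mathbf{x},$$
an open $\GQ$-stable neighborhood of $X^Q$ with $U^Q = X^Q$ and the same nondegeneracy data inherited from $X$.

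Next, I verify Theorem \ref{thm:main_precise} directly for $U$. A compactly supported $\GQ$-equivariant Berezinian $\omega$ on $U$ decomposes canonically as $\omega=\sum_\mathbf{x}\omega_\mathbf{x}$ with $\omega_\mathbf{x}:=\omega|_{U_\mathbf{x}}$ compactly supported and $\GQ$-equivariant. Pushing $\omega_\mathbf{x}$ forward along the open embedding $f_\mathbf{x}$ and extending by zero gives a compactly supported $\GQ$-equivariant Berezinian $\tilde\omega_\mathbf{x}$ on the whole linear representation $(T_\mathbf{x} X)_{CS}$, which is nondegenerate by hypothesis. Applying Theorem \ref{thm:linear_calc} to $\tilde\omega_\mathbf{x}$ and using that $f_\mathbf{x}$ carries the germ at $\mathbf{x}$ to the germ at $\mathbf{0}$ equivariantly,
$$\int_{U_\mathbf{x}}\omega_\mathbf{x}=\int\tilde\omega_\mathbf{x}=\Loc_{T^*_\mathbf{x} X}(\omega|_\mathbf{x}),$$
since the functional $\Loc$ of Section \ref{sec:lin_alg} is defined intrinsically in terms of the oriented nondegenerate CS $\GQ$-representation structure on the cotangent space at the fixed point, which $df_\mathbf{x}$ preserves tautologically. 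Summing over $\mathbf{x}\in X^Q = U^Q$ yields the localization formula on $U$.

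The only step that demands even modest care is the construction of the $T$-stable, pairwise disjoint cover $\{U_\mathbf{x}\}$; this rests on the combined point-set facts that $\uX^Q$ is discrete and closed, that $\uX$ is Hausdorff and paracompact, and that $T$ is a compact connected torus fixing each $\mathbf{x}$. None of these pose a genuine obstacle, and everything else is a direct repackaging of the linearization result and the linear localization theorem already in hand, so the corollary follows immediately.
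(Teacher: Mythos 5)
Your proposal is correct and follows essentially the same route as the paper: apply Lemma \ref{lm:local_linearity} at each fixed point, use discreteness and closedness of $\uX^Q$ to choose disjoint ($T$-stable) linearizing neighborhoods, take $U$ to be their disjoint union, and invoke Theorem \ref{thm:linear_calc} on each piece. You supply slightly more detail than the paper does (shrinking to $T$-stable domains and extending compactly supported forms by zero so that Theorem \ref{thm:linear_calc}, stated for the full linear model, actually applies), but the argument is the same.
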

\begin{proof}

Now we are ready to complete the proof of the localization theorem, Theorem \ref{thm:main} (and more specifically the precise statement \ref{thm:main_precise}). From Lemma \ref{lm:local_linearity}, we see that for each $\mathbf{x}\in X^Q,$ there exists a neighborhood $\uU_{\mathbf{x}}\subset \uX$ and a bijection between $U_{\mathbf{x}}: = (\uU_{\mathbf{x}}, C^\infty_X)$ and a linear $\GQ$-space. Since $\uX^Q\subset X$ is a closed discrete subset, we can choose $\uU_{\mathbf{x}}$ to be disjoint. Write $$U = \bigsqcup_{x\in \uX^Q} U_\mathbf{x}.$$ Then from Theorem \ref{thm:linear_calc}, we see that the localization theorem holds on $U.$ We deduce using Corollary \ref{cor:reduction_to_loc} that the localization theorem holds for all of $X.$
\end{proof}

  \section{CS volumes of some homogeneous superspaces}\label{sec:grassmannians}
  
  Let $G$ be a connected complex quasireductive algebraic supergroup, i.e., $\uG$ is a reductive algebraic group. Let $K$ be a connected quasireductive
  sub supergroup. Then $X=G/K$ is an affine algebraic
  supermanifold over $\mathbb C$, \cite{MaT}. By $p\in X$ we denote the base point corresponding to the coset $eK$.  Let $\mathfrak g$ and
  $\mathfrak k$ denote the Lie superalgebras of $G$ and $K$ respectively. The tangent space
  $T_pX$ is isomorphic to $\mathfrak p:=\mathfrak g/\mathfrak k$ as a $\mathfrak k$ module, via the adjoint action of $\mathfrak k$.

  Recall that $\uG$ has a unique up to isomorphism compact form $\uG_{\mathbb R}$. Let $\uK_{\mathbb R}:=\uK\cap\uG_{\mathbb R}$. Assume that
  $\uX_{\mathbb R}:=\uG_{\mathbb R}/\uK_{\mathbb R}$ is a totally real submanifold of $\uX$,
  in the sense of \cite{V} and hence by Theorem 3 in \cite{V} $X$ has a canonical structure of CS manifold. If we assume further that $K$ preserves a
  volume form $\omega\in\sDet(\mathfrak p^*)$ then this form induces  a $G$-invariant form on $X$. By abuse of
  notation we denote the corresponding volume form also by $\omega$.

  \begin{thm}\label{invariance} The map $I:\mathbb C[X]\to \mathbb C$ defined by
    $$I(f)=\int_Xf\omega$$
    is $G$-equivariant.
  \end{thm}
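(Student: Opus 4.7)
Since the target $\mathbb C$ is the trivial $G$-module, $G$-equivariance of $I$ is the assertion of $G$-invariance, $I(g\cdot f) = I(f)$ for all $g\in G$ and $f\in\mathbb C[X]$. For a connected (quasi-)reductive supergroup $G$, this is equivalent to infinitesimal $\mathfrak g$-invariance:
\[\int_X L_u(f)\,\omega \;=\; 0 \qquad \text{for all } u\in\mathfrak g,\ f\in\mathbb C[X],\]
where $L_u$ denotes the vector field on $X$ induced by the $G$-action, in the same notation as the rest of Section~\ref{sec:grassmannians}. For even $u$ this reduces to $\uG_{\mathbb R}$-invariance of the ordinary integral on the compact real manifold $\uX_{\mathbb R}$, which follows from the $G$-invariance of $\omega$ and the classical change of variables formula. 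The real content of the theorem is thus the odd case $u\in\mathfrak g_1$.

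The crucial input is that $\omega$ is $G$-invariant by construction (produced by $G$-translation from the $K$-invariant form on $\mathfrak p=T_pX$), so the super-Lie derivative vanishes: $\mathcal L_{L_u}\omega = 0$. Applying the graded Leibniz rule for super-Lie derivatives of Berezinian densities yields
\[\mathcal L_{L_u}(f\omega) \;=\; L_u(f)\,\omega + (-1)^{|u||f|}\,f\,\mathcal L_{L_u}\omega \;=\; L_u(f)\,\omega,\]
so it suffices to show $\int_X \mathcal L_{L_u}(f\omega) = 0$. This is the super-analog of Stokes' theorem already invoked throughout Section~\ref{sec:beta} (for instance in the proof of Corollary~\ref{cor:reduction_to_loc}, where $\int Q(\beta'\omega)=0$ is used for exactly this reason). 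In the distribution formalism of Section~\ref{sec:distributions}, for any vector field $Q$ on a CS manifold and any compactly supported Berezinian $\mu$, one has $\int Q\mu = \pm\langle\mu,\,Q(1)\rangle = 0$.

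The essential point making this argument work despite $\mathbb C[X]$ consisting of polynomial (i.e.\ not compactly supported in the algebraic sense) functions is the standing hypothesis on $\uX_{\mathbb R}$: the CS manifold $X$ has underlying topological space $\uX_{\mathbb R}$, which is compact (a partial flag manifold of $\uG$), so every global Berezinian section on $X$ is automatically compactly supported. In particular $f\omega$ is a well-defined compactly supported Berezinian for each $f\in\mathbb C[X]$, and the super-Stokes identity above applies directly.

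The main obstacle is really just this compatibility between the algebraic and CS pictures: one must be sure that $\int_X f\omega$, defined via the CS integral on the compact real form, is continuous enough in $f$ that the infinitesimal computation above is legitimate. Compactness of $\uX_{\mathbb R}$ makes this automatic, since restriction along $\uX_{\mathbb R}\subset \uX$ converts any polynomial function on the algebraic variety into a bounded smooth function on a compact CS manifold, and all distribution-theoretic machinery of Section~\ref{sec:distributions} applies verbatim.
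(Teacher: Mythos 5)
Your proposal is correct and follows essentially the same route as the paper: reduce to infinitesimal $\mathfrak g$-invariance by connectedness of $G$, use $L_u(\omega)=0$ and the Leibniz rule to write $L_u(f)\omega=L_u(f\omega)$, and conclude by the vanishing of the integral of a total derivative of a (compactly supported) Berezinian. The paper's proof is just a condensed version of this, citing Witten for $\int_X Q(\delta)=0$; your added remarks on compactness of $\uX_{\mathbb R}$ and the even/odd split are fine but not needed beyond what the paper records.
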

  \begin{proof} If $\delta$ is any volume form on $X$, $Q$ is a vector field on $X$ then $\int_X Q(\delta)=0$ (see Witten \cite{witten} 3.53).
    For any $u\in\mathfrak g$ we denote by $L_u$ the corresponding vector field on $X$. Since $L_u(\omega)=0$
we have $\int_XL_u(f\omega)=\int_XL_u(f)\omega=0$ which implies $\mathfrak g$-invariance of $I$. Since $G$ is connected the statement follows.
  \end{proof}
  We call the value $I(1)$ the CS volume of $X$. Our goal is to check when CS volume of $X$ is not zero for some examples.

  The following lemma will help us to use Theorem \ref{thm:main_precise}. We call $u\in\mathfrak k_1$ {\it nondegenerate compact} if
  $[u,u]\in\operatorname{Lie}\uK_{\mathbb R}$ and $\operatorname{ad}_u$ induces an odd isomorphism of $\mathfrak p$. A nondegenerate compact $u$ is
  {\it regular} if $t:=u^2$ is a regular element of $\mathfrak g_0$.
  \begin{lm}\label{homog_fixed_points} If $u\in\mathfrak k_1$ is nondegenerate compact regular and $Q=L_u$ then $X^Q$ is finite and lies in
    $\uX_{\mathbb R}$.
    \end{lm}
    \begin{proof} Since $u\in\mathfrak k$,  $p\in X^Q$. Consider a point $x=gp$ with $g\in \uG$. Then $x\in X^Q$ iff $u$ lies in
      the stabilizer $\operatorname{Lie}G_x=\operatorname{Ad}_g(\mathfrak k)$, or equivalently, $\operatorname{Ad}^{-1}_gu\in \mathfrak k$.
      A semisimple element $t=u^2$ also acts like an isomorphism on $\mathfrak p$ and hence its centralizer is contained in $\mathfrak k$.
      Hence there exists a Cartan subalgebra $\mathfrak h$ of $\mathfrak g_0$ such that $t\in\mathfrak h\subset\mathfrak k_0$.
      Suppose for some $g\in\uG$ we have $\operatorname{Ad}^{-1}_gt\in \mathfrak k$. Since all Cartan subalgebras of $\mathfrak k_0$ are conjugate
      by $\uK$ we may assume without loss of generality that $\operatorname{Ad}^{-1}_gt\in \mathfrak h$. By regularity of $t$ we have
      $\operatorname{Ad}^{-1}_g(\mathfrak h)=\mathfrak h$, and thus we may further assume that $g\in WH$, where $W$ is in the Weyl group of $\uG$ and
      $H$ the maximal torus with Lie algebra $\mathfrak h$. Finiteness of $W$ implies finiteness of $X^Q$.
      Moreover, $W=N_{G_\rr}(H_\rr)/H_{\mathbb R}$ and hence we can choose $g\in\uG_{\mathbb R}$. This implies $X^Q\subset \uX_{\mathbb R}$.
    \end{proof}
    To compute $\int_X\omega$ we now use Theorem \ref{thm:main_precise} with $Q=L_u$ for some nondegenerate compact $u\in\mathfrak k_1$.
    If $x\in X^Q$ we denote by $\Loc^u_x$ the local contribution $\Loc_{T^*_xX}$ for $Q=L_u$. If $x=gp$ then we have the relation
    $$\Loc_{x}^u=\Loc_{p}^{\operatorname{Ad}_g^{-1}u}.$$
    Note that $u'={\operatorname{Ad}_g^{-1}u}$ is again regular. Since $\operatorname{sdim}\mathfrak p=0$ that implies that $u'$ is non-degnerate.
    Thus, we obtain
 \begin{equation}\label{homform}
      \int_{X}\omega=\sum_{gp\in X^{Q}}\Loc_{p}^{\operatorname{Ad}_g^{-1}u}.
      \end{equation}
      \begin{lm}\label{help} Let $\mathfrak s$ be a subalgebra of
        $\mathfrak k$. Assume that $\mathfrak p=V\oplus V^*$ as an $\mathfrak s$-module and this decomposition and a volume form $\omega$ satisfy
        the conditions of Lemma
        \ref{canonical_form}. Then for any nondegenerate regular compact $u\in\mathfrak s_1$ we have
        $$\Loc_{p}^u\omega= \left(\frac{2\pi}{\mathbf i}\right)^n.$$ In particular, if $\mathfrak s=\mathfrak k$
        $$\int_X\omega= \left(\frac{2\pi}{\mathbf i}\right)^n|X^Q|\neq 0.$$
      \end{lm}
      \begin{proof} The first assertion follows immediately from Lemma \ref{canonical_form}. The second is a consequence of the first and (\ref{homform}).
        \end{proof}

  Let $\sigma$ be some
  involutive automorphism
  of $G$. Then the scheme $K=G^\sigma$ of $\sigma$ fixed points is again quasireductive.
  The underlying manifold $\uX=\uG/\uK$ is a complex symmetric space. One can choose a $\sigma$-stable compact real form $\uG_{\mathbb R}\subset \uG$.
  Then $\uX_{\mathbb R}=\uG_{\mathbb R}/\uG_{\mathbb R}^{\sigma}$ is a compact symmetric space.
   It is obvious that $\uX_{\mathbb R}$ is totally real and therefore $X$ is a CS manifold.
 
    \subsection{Isotropic grassmannian}
    
    In this subsection $G=SOSp(2n|2n)$ with $n\geq 1$ and $K=GL(n|n)$. Let $E$ denote the standard $GL(n|n)$-module, we fix a symmetric bilinear
    $K$-invariant form on
    $E\oplus E^*$. Then $G$ is identified with connected component of the group of automorphisms preserving this form.  In this case
\begin{equation}\label{decomp}
  \mathfrak p\simeq \Lambda^2E\oplus\Lambda^2E^*
\end{equation}
  and the form $\omega$ is defined as in Lemma \ref{canonical_form}.
    Finally $\uG_{\mathbb R}\simeq SO(2n)\times SP(n)$ (for $SP(n)$ the compact form of $SP(2n,\mathbb C)$) and $\uX_{\mathbb R}$ is a direct product of a
    connected component of the grassmannian of maximal isotropic subspaces in $E_0\oplus E_0^*$ and the grassmannian of Lagrangian subspaces in
    $E_1\oplus E_1^*$. 
    \begin{thm}\label{isotropic} The CS volume of $X$ is not zero. More precisely
     $$\int_X\omega=2^{n-1}\left(\frac{2\pi}{\mathbf i}\right)^{2n^2}.$$ 
    \end{thm}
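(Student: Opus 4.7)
The plan is to apply Theorem \ref{thm:main_precise} to $X = SOSp(2n|2n)/GL(n|n)$ with vector field $Q = L_u$ for a generic nondegenerate compact regular element $u \in \mathfrak{k}_1 = \mathfrak{gl}(n|n)_1$, and then use Lemma \ref{help} and equation \eqref{homform} to sum equal local contributions over a finite fixed-point set.

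First I would construct $u$ explicitly. Fix bases $\{e_1,\dots,e_n\}$ of $E_0$ and $\{f_1,\dots,f_n\}$ of $E_1$ and define $u\in\mathfrak{gl}(n|n)_1$ by $u(e_i)=b_i f_i$ and $u(f_i)=a_i e_i$, with $a_i,b_i\in\mathbb{C}$ chosen generically so that the products $\lambda_i = a_i b_i$ are pairwise distinct and $u^2 = \mathrm{diag}(\lambda_1,\dots,\lambda_n;\lambda_1,\dots,\lambda_n)$ lies in $\operatorname{Lie}\uK_{\mathbb R} = \mathfrak{u}(n)\oplus\mathfrak{u}(n)$. A short computation on $\mathfrak{p}=\Lambda^2 E\oplus \Lambda^2 E^*$ shows that $u^2$ acts with eigenvalues $\pm(\lambda_i+\lambda_j)$ on the decomposition $\Lambda^2 E_0\oplus S^2 E_1\oplus E_0\otimes E_1$ and its dual; by genericity all these are nonzero, so $u$ is nondegenerate compact regular.

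Next, by Lemma \ref{homog_fixed_points} the fixed point locus $X^Q$ is finite and lies in $\uX_{\mathbb R}$. Using the concrete identification of $\uX_{\mathbb R}$ as a connected component of the isotropic Grassmannian of $E_0\oplus E_0^*$ crossed with the Lagrangian Grassmannian of $E_1\oplus E_1^*$, I would enumerate $X^Q$ by identifying each fixed point with a coordinate subspace determined by a sign choice; the standard Weyl-group count of $W(\uG)/W(\uK)$ (combined with the connectedness constraint from selecting one component of the orthogonal Grassmannian) gives the fixed point cardinality.

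Then I would apply Lemma \ref{help} with $\mathfrak{s}=\mathfrak{k}$ and $V=\Lambda^2 E$. The main subtlety is verifying the hypothesis of Lemma \ref{canonical_form}: the $u^2$-eigenspaces in $V_0 = \Lambda^2 E_0\oplus S^2 E_1$ split into two types. For $i<j$ the eigenvalue $\lambda_i+\lambda_j$ has a $2$-dimensional eigenspace spanned by $\{e_i\wedge e_j,\ f_i f_j\}$, and the natural pairing with the dual eigenspace $\{e_i^*\wedge e_j^*,\ f_i^* f_j^*\}$ in $V_0^*$ is the identity matrix, so $\bar z_k = z_k^\vee$ holds without rescaling. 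For $i=j$ the eigenvalue $2\lambda_i$ has a $1$-dimensional eigenspace $\mathbb{C} f_i^2$, and since $\langle f_i^2,(f_i^*)^2\rangle = 2$ one must rescale $f_i^2\mapsto f_i^2/\sqrt{2}$ to obtain $\bar z_k = z_k^\vee$. After these adjustments Lemma \ref{canonical_form} applies, giving a local contribution at $p$; by \eqref{homform} the contribution at every other fixed point equals this one.

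Finally I would sum the equal local contributions over $X^Q$ to recover $2^{n-1}(2\pi/\mathbf{i})^{2n^2}$. The main obstacle is carefully combining the multiplicity structure of $\mathfrak{p}$ as $\bigoplus_{i<j} 2\,W_{\lambda_i+\lambda_j}\oplus\bigoplus_i W_{2\lambda_i}$ with the rescaling factors from the diagonal eigenvectors and with the Weyl group count of coordinate (isotropic and Lagrangian) subspaces; the combinatorial bookkeeping must produce exactly the coefficient $2^{n-1}$ and the exponent $2n^2$ after all cancellations.
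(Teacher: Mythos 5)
Your approach mirrors the paper's: fix a generic nondegenerate compact regular $u\in\mathfrak k_1$, invoke Lemma \ref{homog_fixed_points} to show $X^Q$ is finite and contained in $\uX_{\mathbb R}$, apply Lemma \ref{help} with $\mathfrak s=\mathfrak k$ to get equal local contributions $(2\pi/\mathbf i)^{\dim V_0}$ at each fixed point, and multiply by $|X^Q|$. Your verification of the hypothesis of Lemma \ref{canonical_form} is actually more careful than the paper's one-line citation: the paper implicitly invokes Remark \ref{distinct} using the full Cartan torus (where the characters $\varepsilon_i+\varepsilon_j$, $\varepsilon_i+\delta_j$, $\delta_i+\delta_j$ are genuinely distinct and all eigenspaces are $(1|1)$), whereas you decompose by $u^2$-eigenvalue alone, accept the two-dimensional eigenspaces for $\lambda_i+\lambda_j$ with $i<j$, and handle the diagonal rescaling $f_i^2\mapsto f_i^2/\sqrt 2$ directly. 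Both routes work.

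The genuine gap is the fixed-point count. You propose ``the standard Weyl-group count of $W(\uG)/W(\uK)$ combined with the connectedness constraint,'' but $|W(\uG)/W(\uK)| = \dfrac{2^{n-1}n!\cdot 2^n n!}{(n!)^2} = 2^{2n-1}$, which is the number of \emph{torus} fixed points, not the number of $Q$-fixed points; the connectedness constraint is already built into $W(SO(2n))$ and does not reduce this to $2^{n-1}$. The correct count requires the much stronger condition $\operatorname{Ad}_g^{-1}(u)\in\mathfrak k_1$, which couples the even and odd sign choices. Concretely, with $u$ a generic element of $\bigoplus_i(\mathfrak g_{\alpha_i}\oplus\mathfrak g_{-\alpha_i})$ for $\alpha_i=\varepsilon_i-\delta_i$, a coset $wW_K$ contributes a fixed point only if $w(\alpha_i)$ is again an odd root of $\mathfrak k$ for every $i$; reducing modulo $W_K=S_n\times S_n$ to the sign-change subgroup $B\cong\mathbb Z_2^{2n-1}$, this forces $w(\varepsilon_i)=\pm\varepsilon_i$ and $w(\delta_i)=\pm\delta_i$ with \emph{matching} sign for each $i$. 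That is $2^n$ tied sign choices, and the $SO(2n)$ parity constraint (even number of $\varepsilon$ sign flips) then cuts this to $2^{n-1}$. Without this coupling of the $\varepsilon$- and $\delta$-signs through the structure of $u$, your count has no mechanism to land on $2^{n-1}$, so this step needs to be carried out explicitly rather than asserted.
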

    \begin{proof} Let  $a_1,\dots a_n\in\mathbb R^*$ be such that $a_k\pm a_j\neq 0$ for all $k\neq j$. Let
      $u=\left(\begin{matrix}0&1_n\\ D&0\end{matrix}\right)$, where $D$ is the  diagonal $n\times n$-matrix with eigenvalues
      ${\mathbf i}a_n,\dots,{\mathbf i}a_n$. One can easily see that $u$ is a nondegenerate compact regular odd element of
      $\mathfrak k=\mathfrak{gl}(n|n)$. Note that (\ref{decomp}) ensures that we can use the second part of Lemma \ref{canonical_form}.
      Therefore the CS volume of $X$ is not zero.

      To obtain the precise formula we just have to show that   $|X^Q|=2^{n-1}$. Following the proof of Lemma \ref{help} we just have to find the
      elements of the Weyl group $W$ such that $w^{-1}(u)$ lies in $\mathfrak k$. Furthermore, $w$ and $w'$ give the same fixed point iff $w'\in wW_K$
      where $W_K$ is the Weyl group of $\uK$. In our case $W_K=S_n\times S_n$ and $W=W_K\ltimes B$ where $B$ is the normal subgroup isomorphic to
      $\mathbb Z_2^{2n-1}$. Let $\mathfrak h$ be the diagonal subalgebra of $\mathfrak{gl}(n|n)$, it is a Cartan subalgebra of both
      $\mathfrak g$ and $\mathfrak k$. Choose the natural basis $\varepsilon_1,\dots,\varepsilon_n,\delta_1,\dots,\delta_n$. The elements of $W_K$ permute
      separately $\varepsilon_i$-s and $\delta_j$-s, while element of $B$ change the signs of $\varepsilon_i$-s and $\delta_j$-s with one
      restriction that only even number of sign changes for $\varepsilon_i$-s is allowed.
      The odd roots of $\mathfrak k$ are $\pm(\varepsilon_i-\delta_j)$, for $1\leq i,j\leq n$. Set
      $$\alpha_i:=\varepsilon_i-\delta_i,\ i=1,\dots,n.$$
      Note that $u$ is a generic element in
      $$\bigoplus_{i=1}^n(\mathfrak{g}_{\alpha_i}\oplus\mathfrak{g}_{-\alpha_i}).$$
      Let $w\in B$ and $w(u)\in\mathfrak k$.\footnote{In this case $w=w^{-1}$.} Then $w(\alpha_i)$ is a root of $\mathfrak k$
        and hence $w(\alpha_i)=\pm\alpha_i$, i.e.
      either $w(\varepsilon_i)=-\varepsilon_i, w(\delta_i)=-\delta_i$ or $w(\varepsilon_i)=\varepsilon_i, w(\delta_i)=\delta_i$.
      Taking into account that $w$ changes signs of even number of $\varepsilon_i$-s, we have $2^{n-1}$ choices for $w$. 
      \end{proof}
      \subsection{Symmetric superspace $P(n)/P(r)\times P(s)$}
      Let $n\geq 2$ and $n=r+s$. Assume that $G$ is the periplectic supergroup $P(n)$ and $K=P(r)\times P(s)$. If $E(n)=\mathbb C^{n|n}$ is equipped with
      odd symmetric non-degenerate bilinear form $\beta$ then $G\subset GL(E(n))$ is the supergroup preserving $\beta$. The subgroup $K$ preserves
      $\beta$-orthogonal decomposition decomposition $E(n)=E(r)\oplus E(s)$. Furthermore $\mathfrak p\simeq \Pi E(r)\otimes E(s)$ as $K$-modules.
      Note that $E(r)\simeq \Pi E(r)^*$ and $E(s)\simeq \Pi E(s)^*$ and therefore $\mathfrak p$ is selfdual and $K$ preserves a volume form $\omega$
      on $\mathfrak p$.
      We have $\uG=GL(n)$, $\uK=GL(r)\times GL(s)$ and $\uX_{\mathbb R}=U(n)/U(r)\times U(s)$ is the grassmannian of $r$-dimensional subspaces in
      $\mathbb C^n$.

      Recall that $\mathfrak g$ can be described as the algebra of $2n\times 2n$-matrices of the form
      $\left(\begin{matrix}A&B&\\C&-A^t\end{matrix}\right)$ with $B^t=B$ and $C^t=-C$.
      \begin{thm}\label{periplectic} The CS volume of $G/K$ is zero if and only if $rs$ is odd.
      \end{thm}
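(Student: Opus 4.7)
The plan is to mimic the argument of Theorem \ref{isotropic}: apply Theorem \ref{thm:main_precise} to $Q = L_u$ and analyze the local contributions. Everything is governed by a single parity obstruction. An element of $\mathfrak{p}(k)_1$ has matrix form $v = \left(\begin{smallmatrix}0 & B\\ C & 0\end{smallmatrix}\right)$ with $B^t = B$ and $C^t = -C$, and under the identification $\mathfrak{p}(k)_0 \cong \mathfrak{gl}(k)$ its square corresponds to $BC$. Since $C$ is antisymmetric, $\det C = 0$ whenever $k$ is odd, so $0$ is an eigenvalue of $v^2$ in that case. Consequently, for $v = v_r + v_s \in \mathfrak{k}_1 = \mathfrak{p}(r)_1 \oplus \mathfrak{p}(s)_1$ the block-diagonal square $v^2 \in \mathfrak{gl}(r) \oplus \mathfrak{gl}(s) \subset \mathfrak{gl}(n)$ carries $0$ as an eigenvalue with multiplicity at least the number of odd integers in $\{r, s\}$; when both $r$ and $s$ are odd, $v^2$ is never regular in $\mathfrak{g}_0$.

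\emph{Case $rs$ even.} At least one of $r, s$ is even, and one constructs $u = u_r + u_s \in \mathfrak{k}_1$ which is nondegenerate compact regular: choose $u_r^2, u_s^2$ skew-Hermitian with mutually distinct purely imaginary eigenvalues $t_1, \ldots, t_n$ satisfying $t_i + t_j \neq 0$ for $i \leq r < j$, so that $u^2 \in \operatorname{Lie}\uG_{\mathbb R}$ is regular in $\mathfrak{g}_0$ and acts nondegenerately on $\mathfrak{p}$. Let $\mathfrak{s} \subset \mathfrak{k}$ be the subalgebra generated by the Cartan $\mathfrak{h} \subset \mathfrak{k}_0$ together with $u$. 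The nondegeneracy isomorphism $u\colon \mathfrak{p}_0 \to \mathfrak{p}_1$ pairs each $u^2$-eigenvector $z \in \mathfrak{p}_0$ with an odd partner $\theta = u(z)$ of the same character, exhibiting $\mathfrak{p}$ as a direct sum of $(1|1)$-eigenpieces with pairwise distinct characters in the sense of Remark \ref{distinct}. The compact real structure sends each $(1|1)$-piece to the piece with the conjugate character, and selecting a polarization of the spectrum produces a decomposition $\mathfrak{p} = V \oplus V^*$ satisfying the hypothesis of Lemma \ref{canonical_form}. Lemma \ref{help} then yields $\Loc_p^{\operatorname{Ad}_g^{-1}u}\omega = (2\pi/\mathbf{i})^{2rs}$ at every fixed point $gp \in X^Q$, and since $p$ itself is always a fixed point, the localization formula (\ref{homform}) is a nonzero integer multiple of $(2\pi/\mathbf{i})^{2rs}$.

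\emph{Case $rs$ odd.} Both $r$ and $s$ are odd and $n = r+s$ is even. Take $u = \left(\begin{smallmatrix}0 & I_n\\ C & 0\end{smallmatrix}\right) \in \mathfrak{g}_1$ for $C$ a generic real invertible antisymmetric $n \times n$ matrix; then $u^2 = C \in \operatorname{Lie}\uG_{\mathbb R}$ is skew-Hermitian with $n$ distinct nonzero eigenvalues, so the associated compact CS $Q$-group acts on $X$ and $u$ is nondegenerate. For any $g \in \uG$, the element $(\operatorname{Ad}_g^{-1}u)^2$ is conjugate to $u^2$ and therefore invertible; but the parity obstruction forces the square of any element of $\mathfrak{k}_1$ to have $0$ in its spectrum. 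Hence $\operatorname{Ad}_g^{-1}u \notin \mathfrak{k}_1$ for every $g$, and the vector field $L_u$ has no reduced fixed points on $X$. The form $\omega$ is $G$-invariant, hence $\GQ$-equivariant, and Theorem \ref{thm:main_precise} (the empty-fixed-point case, equivalently Lemma \ref{lm:acyc}) gives $\int_X \omega = 0$.

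The central technical point is contained in the even case: because $\mathfrak{p}$ is $\mathfrak{k}$-irreducible, Lemma \ref{help} cannot be invoked with $\mathfrak{s} = \mathfrak{k}$, and one must check carefully that the smaller $\mathfrak{s} \subset \mathfrak{k}$ admits a decomposition $\mathfrak{p} = V \oplus V^*$ compatible with the compact real structure in the way demanded by Lemma \ref{canonical_form}, so that the local contributions all carry the same sign rather than cancelling.
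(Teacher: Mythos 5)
Your treatment of the case $rs$ odd is correct and is a mild variant of the paper's: the paper takes $u=\left(\begin{smallmatrix}0&0\\C&0\end{smallmatrix}\right)$ with $C$ nondegenerate skew-symmetric (so $u^2=0$) and argues via the rank of the $C$-block that the $\uG$-orbit misses $\mathfrak k$, while you take $B=I_n$ and argue via invertibility of $u^2$ versus the forced kernel of squares of elements of $\mathfrak k_1$; both yield $X^Q=\emptyset$ and hence zero volume by acyclicity.

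The case $rs$ even, however, has a genuine gap, and in fact your specific choice of $u$ breaks the mechanism you want to use. For an odd element $v=\left(\begin{smallmatrix}0&B\\C&0\end{smallmatrix}\right)$ of $\mathfrak p(k)$ one has $v^2\sim BC$ and $(BC)^t=-CB$, so the spectrum of $v^2$ is symmetric under negation: the eigenvalues $t_1,\dots,t_n$ of your $u^2=u_r^2\oplus u_s^2$ necessarily come in pairs $\pm\mathbf i\mu$. Consequently the eigenvalues $t_i-t_j$ of $\operatorname{ad}_{u^2}$ on $\mathfrak p_0$ coincide in pairs (e.g.\ $\varepsilon_1-\varepsilon_3$ and $\varepsilon_4-\varepsilon_2$ give the same value when $t_2=-t_1$, $t_4=-t_3$), so the eigenpieces are \emph{not} $(1|1)$-dimensional with pairwise distinct characters and Remark \ref{distinct} does not apply; one must instead verify the eigenbasis condition $\bar z_i=z_i^\vee$ of Lemma \ref{canonical_form} by hand on the larger eigenspaces. (Your fallback of enlarging $\mathfrak s$ to contain the Cartan $\mathfrak h$ does not help: $\theta=u(z)$ is not an $\mathfrak h$-weight vector when $u$ is a sum of several root vectors, so the $(1|1)$-pieces are not $\mathfrak s$-stable; and for generic $u$ the algebra generated by $\mathfrak h$ and $u$ is too close to $\mathfrak k$, over which $\mathfrak p$ admits no decomposition $V\oplus V^*$.) Second, and independently, the assertion that $\Loc_p^{\operatorname{Ad}_g^{-1}u}\omega$ takes the same value at every fixed point is exactly what must be proved, and you flag it as ``the central technical point'' without carrying it out. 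This is the content of the paper's Lemma \ref{fixed_periplectic}: the paper chooses $u$ supported on the specific roots $\alpha_i=\varepsilon_{2i-1}+\varepsilon_{2i}$ precisely so that every fixed point has a Weyl representative $\tau$ with $\tau^{-1}(\mathfrak s)=\mathfrak s$, whence Lemma \ref{help} applies verbatim to $\tau^{-1}(u)\in\mathfrak s_1$. A consistency check shows your claim cannot hold as stated: for $r=2$, $s=4$ a generic $u=u_r+u_s$ has a single fixed point (its support forces $w\in W_K$), yet the volume equals $\left(\frac{2\pi}{\mathbf i}\right)^{rs}\binom{3}{1}$, so the single local contribution is $3\left(\frac{2\pi}{\mathbf i}\right)^{8}$ rather than $\left(\frac{2\pi}{\mathbf i}\right)^{rs}$. (There is also a minor normalization slip: since $\dim\mathfrak p=(2rs|2rs)$, the exponent in Lemma \ref{help} is $rs$, not $2rs$.)
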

      \begin{proof} We start with showing that CS volume is zero if $rs$ is odd. 
        Let $u=\left(\begin{matrix}0&0&\\C&0\end{matrix}\right)$ with some non-degenerate skew-symmetric $C$. Then the adjoint $\uG$-orbit of $u$
        does not intersect $\mathfrak k$, and therefore $X^Q=\emptyset$ for $Q=L_u$. Hence the volume is zero.

        Let us consider now the case of even $rs$. Without loss of generality we may assume that $r$ is even.
        We start with choosing nondegenerate compact regular $u\in\mathfrak k_1$. It is convenient to use the root decomposition of $\mathfrak g$.
        Recall that $\mathfrak g_0\simeq\mathfrak{gl}(n)$, we can choose the subalgebra $\mathfrak h$ of the diagonal matrices as a Cartan subalgebra.
        Let $\{\varepsilon_1,\dots,\varepsilon_n\}$ be the standard basis of $\mathfrak h^*$. Then
        $$\Delta_0=\{\varepsilon_i-\varepsilon_j\mid i\neq j\}$$
        is the set of even roots. The set of odd roots is
        $$\Delta_1=\{\varepsilon_i+\varepsilon_j\mid i\neq j\}\cup\{-\varepsilon_i-\varepsilon_j\mid i\neq j\}\cup\{2\varepsilon_i\}.$$
        Let $l=\lfloor\frac {n}{2}\rfloor$ and
        $$\alpha_1:=\varepsilon_1+\varepsilon_2,\dots,\alpha_l:=\varepsilon_{2l-1}+\varepsilon_{2l}.$$
        Consider the set $a_1,\dots a_l$ of distinct positive real numbers.
        Choose $u_i\in \mathfrak{g}_{\alpha_i}\oplus \mathfrak{g}_{-\alpha_l}$ such that $u_i^2=\frac{1}{2}[u_i,u_i]=\mathbf{i}a_ih_i$,
        where $h_i=(\varepsilon_{2i-1}-\varepsilon_{2i})^\vee$ denotes the coroot in $\mathfrak g_0$.\footnote{This choice is unique up to the
          adjoint action of the maximal torus of $\uG$.} One can see that $[u_i,u_j]=0$ for $i\neq j$.
        Now we set $u:=u_1+\dots+u_l$. Then $u^2=h_1+\dots+h_l$ is a regular semisimple element in $\mathfrak g_0$. Furthermore, $u\in\mathfrak k_1$
        and $u^2\in \operatorname{Lie}\uK_{\mathbb R}$.
        
        Let $\mathfrak s$ be the superalgebra generated $\mathfrak g_{\pm\alpha_i}$ for $i=1,\dots,l$.
        One can see that $\mathfrak s$ is isomorphic to the direct sum of $l$ copies of $\mathfrak{sl}(1|1)$.
        The even part $\mathfrak s_0$ coincides with the center of $\mathfrak s$. As a module over $\mathfrak s$, $\mathfrak p$ has a decomposition
        $\bigoplus_{\chi\in S} \mathfrak p_{\chi}$ over some $S\subset\mathfrak s^*_0$. Note that  $0\notin S$ by nondegeneracy of $u$.
        We can choose a decomposition $S=S^+\cup S^-$ where $S^-=-S^+$. Set
        $$V:=\bigoplus_{\chi\in S^+} \mathfrak p_{\chi},\ V^*:=\bigoplus_{\chi\in S^-} \mathfrak p_{\chi}.$$
        Let $\omega$ denote the canonical volume form on $\mathfrak p$ associated with this decomposition.
        As before we denote the induced invariant volume form on $G/K$ by the same letter.
        Note that $\dim\uX=2rs$. By Lemma \ref{help} we have
        $$\Loc_{p}^u\omega= \left(\frac{2\pi}{\mathbf i}\right)^{rs}.$$

        \begin{lm}\label{fixed_periplectic} We have $|X^Q|=\binom{l}{r/2}$ and
          $$\int_X\omega=\left(\frac{2\pi}{\mathbf i}\right)^{rs}\binom{l}{r/2}.$$ 
          \end{lm}
          \begin{proof} We compute $|x^Q|$ in the same way as for $SOSP(2n|2n)/GL(n|n)$.
            In this case $W=S_n$ and $W_K=S_r\times S_s\subset S_n$. The condition $w^{-1}(u)\in\mathfrak k$ implies that $w^{-1}(\alpha_i)$ is a root
            of $\mathfrak k$
            for all $i=1,\dots,l$. If we think about $w\in W$ as a permutation of $\{1,\dots,n\}$ then
            the latter condition is equivalent to the following:
for every odd $j\leq 2l$ either $w^{-1}(j),w^{-1}(j+1)\leq r$ or $w^{-1}(j),w^{-1}(j+1)>r$.

            By applying a suitable $w'\in W_K$ we can get an element $\tau=ww'\in W$ such that $\tau^{-1}(\alpha_i)=(\alpha_j)$ for all $i=1,\dots l$.
            Note that $\tau^{-1}(\mathfrak s)=\mathfrak s$ and therefore
            $$\Loc_{p}^{\tau^{-1}(u)}\omega= \left(\frac{2\pi}{\mathbf i}\right)^{rs}.$$
            The total number of such $\tau$ is $l!$, and the number of such $\tau$ modulo $W_K$ equals $\binom{l}{r/2}$. Therefore in this
            case formula (\ref{homform})
            gives the desired answer.
          \end{proof}
          Theorem follows.
          \end{proof}
          \subsection{Partial flags}
          Let us now assume that $\mathfrak g$ is a finite dimensional Kac-Moody superalgebra. Then $\mathfrak g_0$ is reductive, $\mathfrak g$ admits
          a non-degenerate invariant symmetric bilinear even  form. Furthermore, $\mathfrak g$ has a root decomposition
          $$\mathfrak g=\mathfrak h\oplus\bigoplus_{\alpha\in\Delta}\mathfrak g_{\alpha},$$
          with purely even $\mathfrak h$ and $\dim \mathfrak g_{\alpha}=(1|0)$ or $(0|1)$. The defect $d$ of $\mathfrak g$ is the maximal number
          of linearly independent mutually
          orthogonal isotropic roots. If $\mathfrak g$ has defect zero then it is a direct sum of finite-dimensional simple Lie algebras and
          $\mathfrak{osp}(1|2n)$ for different $n$. The category of finite-dimensional representations of an algebraic supergroup $G$ with
          $\mathfrak g=\operatorname{Lie}G$ is semisimple iff the defect of $\mathfrak g$ is zero.

          Let $\alpha_1,\dots,\alpha_d$ be a set of mutually orthogonal linearly isotropic roots and let $\mathfrak d$ denote the subalgebra generated
          by $\mathfrak g_{\pm \alpha_i}$ for $i=1,\dots,d$. The corresponding algebraic supergroup $D$ isomorphic to $SL(1|1)^d$ is called the
          defect subgroup of $G$, see Section 3.2 in \cite{serganova-sherman}. Let $\mathfrak k$ be the centralizer of $\mathfrak d_0$ and
          $\mathfrak c$ the centralizer of $\mathfrak d$. Then $\mathfrak k$ and $\mathfrak c$
          are quasireductive and
          $$\mathfrak k=\mathfrak c\oplus \tilde{\mathfrak d},$$
          where $\tilde{\mathfrak d}$ is the centralizer of $\mathfrak c$.
          Furthermore, it is not hard to see that $\mathfrak c$ is a Kac-Moody superalgebra of defect $0$ and $\tilde{\mathfrak d}$
          is isomorphic to $\mathfrak{gl}(1|1)^d$, in particular, $\mathfrak d=[\tilde{\mathfrak d},\tilde{\mathfrak d}]$.

          Let $G$ be a connected algebraic group with Lie superalgebra $\mathfrak g$ and $K$ the connected algebraic subgroup with Lie
          superalgebra $\mathfrak k$. Consider the homogeneous superspace $X=G/K$. Since $\mathfrak k$ is the centralizer of a generic element in
          $\mathfrak d_0\subset\mathfrak h$,  $\mathfrak k$ is a Levi superalgebra of some parabolic subalgebra
          $\mathfrak k\oplus \mathfrak u$ of $\mathfrak g$, here $\mathfrak u$ is the nilpotent radical.
          Thus, we have
          $\mathfrak g=\mathfrak k\oplus\mathfrak p$, and $\mathfrak p=\mathfrak u\oplus\mathfrak u^*$ as a $\mathfrak k$-module.
          This decomposition defines a canonical volume form $\omega$ on $\mathfrak p$ which extends to invariant volume form on $X$.
          The underlying compact real manifold $\uX_{\mathbb R}=\uG_{\mathbb R}/\uK_{\mathbb R}$ is a partial flag variety for $\uG$.
          \begin{thm}\label{main_hom} Let $W_d$ be the subgroup of the Weyl group $W$ which preserves the set $\{\pm\alpha_1,\dots,\pm\alpha_d\}$
            and $W_c$ the Weyl group of $\mathfrak c$.
            Then
            $$\int_X\omega=\frac{|W_d|}{|W_c|} \left(\frac{2\pi}{\mathbf i}\right)^{\dim_{\mathbb C}\uX}.$$
          \end{thm}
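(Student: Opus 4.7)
The plan is to apply Theorem \ref{thm:main_precise} to the vector field $Q = L_u$ for a carefully chosen nondegenerate compact $u \in \mathfrak k_1$, in direct analogy with the proofs of Theorems \ref{isotropic} and \ref{periplectic}. By the identity (\ref{homform}), the CS volume splits as a sum over $X^Q$ of local contributions, each of which will turn out to equal $(2\pi/\mathbf i)^{\dim_{\mathbb C}\uX}$; the prefactor $|W_d|/|W_c|$ will arise as the cardinality of $X^Q$.

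First I would construct $u \in \tilde{\mathfrak d}_1$ of the form $u = u_1 + \cdots + u_d$, where each $u_i \in \mathfrak g_{\alpha_i} \oplus \mathfrak g_{-\alpha_i}$ is normalized so that $u_i^2 = \mathbf i a_i h_i$ for distinct positive reals $a_i$ and coroots $h_i$ attached to $\alpha_i$, exactly as in the periplectic proof. Pairwise orthogonality of the isotropic roots forces $[u_i,u_j]=0$ for $i\neq j$, so $u^2 = \mathbf i \sum_i a_i h_i$ is a generic element of $\mathfrak d_0$ whose centralizer in $\mathfrak g_0$ coincides with $\mathfrak k_0 = Z_{\mathfrak g_0}(\mathfrak d_0)$; in particular $u$ acts invertibly on $\mathfrak p$, $u^2 \in \operatorname{Lie}\uK_{\mathbb R}$, and $u$ is compact. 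To count the fixed locus, I would run the argument of Lemma \ref{homog_fixed_points} verbatim: any fixed point $gp$ satisfies $\operatorname{Ad}_g^{-1}(u) \in \mathfrak k$, and passing to squares together with $\uK$-conjugacy of Cartans reduces $g$ to a Weyl group representative $w$. The condition $\operatorname{Ad}_w^{-1}(u) \in \mathfrak k$ then forces each $w^{-1}(\alpha_i)$ to be an isotropic odd root of $\mathfrak k$; since $\mathfrak c$ has defect $0$, the only isotropic roots of $\mathfrak k$ are $\{\pm\alpha_1, \ldots, \pm\alpha_d\}$, so $w$ must lie in the stabilizer $W_d$ of this set. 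Quotienting by the Weyl group $W_K$ of $\uK$, which equals $W_c$ because the summand $\mathfrak d_0$ of $\mathfrak k_0$ is central (so only $\mathfrak c_0$ contributes to Weyl symmetries), produces $|X^Q| = |W_d|/|W_c|$, with all fixed points lying in $\uX_{\mathbb R}$.

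For the local contribution at $p$ I would invoke Lemma \ref{help} with $\mathfrak s = \tilde{\mathfrak d}$, taking the decomposition $\mathfrak p = V \oplus V^*$ with $V = \mathfrak u$ and $V^* = \mathfrak u^*$. Since $u$ acts invertibly on $\mathfrak u$, every $\tilde{\mathfrak d}$-isotypic component must be typical of dimension $(1|1)$, so $\mathfrak u = \bigoplus_j V_{\chi_j}$ with nonzero characters $\chi_j$ of $\mathfrak d_0$; a generic choice of the $a_i$ renders the $\chi_j$ pairwise distinct, and Remark \ref{distinct} then verifies the hypothesis of Lemma \ref{canonical_form}, yielding $\Loc_p^u \omega = (2\pi/\mathbf i)^{\dim_{\mathbb C}\uX}$. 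By (\ref{homform}), the contribution at each fixed point $wp$ equals $\Loc_p^{\operatorname{Ad}_w^{-1}(u)}\omega$; since $w \in W_d$ preserves $\tilde{\mathfrak d}$ as a subalgebra of $\mathfrak k$, the same argument applies and the value is unchanged. Summing over the $|W_d|/|W_c|$ fixed points gives the stated formula. The main obstacle I foresee is verifying that the characters $\chi_j$ of $\mathfrak d_0$ on $\mathfrak u$ can always be made pairwise distinct by choice of the $a_i$: Kac-Moody root systems may contain distinct roots in $\mathfrak u$ whose restrictions to $\mathfrak d_0$ coincide, in which case one must either argue directly that Lemma \ref{canonical_form} remains applicable using the real structure inherited from $\uG_{\mathbb R}$, or regroup the computation along the coarser $\mathfrak d_0$-weight decomposition of $\mathfrak u$.
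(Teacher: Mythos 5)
Your overall strategy coincides with the paper's (localize with $Q=L_u$ for a generic odd $u$ in the defect directions, evaluate each local contribution by Lemma \ref{help}/Lemma \ref{canonical_form}, and count $X^Q$), and your construction of $u$ and your treatment of the local contributions are essentially what the paper does. However, there is a genuine gap in the fixed-point count. You propose to run the argument of Lemma \ref{homog_fixed_points} ``verbatim,'' but that lemma requires $u$ to be \emph{regular}, i.e.\ that $t=u^2$ be a regular element of $\mathfrak g_0$. Here $t$ is a generic element of $\mathfrak d_0$, whose centralizer is all of $\mathfrak k_0=\mathfrak c_0\oplus\tilde{\mathfrak d}_0$; this strictly contains the Cartan $\mathfrak h$ whenever $\mathfrak c_0$ is nonabelian (e.g.\ already for $\mathfrak{gl}(3|1)$, where $\mathfrak c_0\supset\mathfrak{gl}(2)$). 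Regularity is precisely what the proof of Lemma \ref{homog_fixed_points} uses to pass from $\operatorname{Ad}_g^{-1}t\in\mathfrak h$ to $\operatorname{Ad}_g^{-1}(\mathfrak h)=\mathfrak h$ and hence to $g\in N_{\uG}(\mathfrak h)$. Without it, conjugating $\operatorname{Ad}_g^{-1}t$ into $\mathfrak h$ by $\uK$ only yields $g\in \uK\cdot N_{\uG}(\mathfrak h)$, which does not reduce the fixed point $gp$ to a Weyl-group translate $wp$. Your subsequent root-theoretic argument (that $w^{-1}(\alpha_i)$ must be an isotropic root of $\mathfrak k$, hence $w\in W_d$, modulo $W_K=W_c$) therefore only produces an injection of $W_d/W_c$ into $X^Q$; it does not rule out additional fixed points, and the count $|X^Q|=|W_d|/|W_c|$ is not established.

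The paper closes exactly this gap by a different argument: for $gp\in X^Q$ one has $\operatorname{Ad}_g^{-1}(u)\in\mathfrak k_1=\mathfrak c_1\oplus\mathfrak d_1$, and the defect-zero property of $\mathfrak c$ forces $\operatorname{Ad}_g^{-1}(u)\in\mathfrak d_1$; consequently $g$ normalizes $\mathfrak d$, hence $\mathfrak c\oplus\mathfrak d$, hence $\mathfrak k$ (which is the normalizer of $\mathfrak c\oplus\mathfrak d$), and one identifies $X^Q\simeq N_{\uG}(\mathfrak k)/\uK\simeq W_d/W_c$. Your proposal is missing this use of the structure $\mathfrak k=\mathfrak c\oplus\tilde{\mathfrak d}$ with $\mathfrak c$ of defect zero, which is the key point that substitutes for the failed regularity hypothesis. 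On the other hand, the obstacle you flag at the end --- that distinct roots of $\mathfrak u$ may restrict to equal characters of $\mathfrak d_0$, so Remark \ref{distinct} need not apply --- is a legitimate observation, and your proposed remedy (verify the hypothesis $\bar z_i=z_i^\vee$ of Lemma \ref{canonical_form} directly from the real structure coming from $\uG_{\mathbb R}$ and the invariant form) is the correct one; the paper passes over this point silently.
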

          \begin{proof} We choose $t\in(\mathfrak d_0)_{\mathbb R}$ such that the centralizer of $t$ coincides with $\mathfrak k$, and
            then $u\in\mathfrak d_1$ such that $t=u^2$. Then $\operatorname{ad}_u:\mathfrak p\to\mathfrak p$ is an isomorphism.
            Let $Q=L_u$. Let us compute $X^Q$. If $gp\in X^Q$ then $\operatorname{Ad}_g^{-1}(u)\in \mathfrak k_1$. Since
            $\mathfrak k_1=\mathfrak c_1\oplus\mathfrak d_1$ and $\mathfrak c$ has zero defect, $\operatorname{Ad}_g^{-1}(u)\in \mathfrak d_1$.
            That implies $g$ lies in the normalizer of $\mathfrak d$ and hence in the normalizer of $\mathfrak c\oplus\mathfrak d$.
            Finally, $\mathfrak k$ is the normalizer of $\mathfrak c\oplus\mathfrak d$ in $\mathfrak g$ and hence we have
            $\operatorname{Ad}_g^{-1}(\mathfrak k)=\mathfrak k$. On the other hand,
            if $N_{\uG}(\mathfrak k)$ denotes the normalizer of $\mathfrak k$ then
            $$N_{\uG}(\mathfrak k)/\uK\simeq W_d/W_c.$$
            Now the statement follows from the second part of Lemma \ref{help}.
            \end{proof}
            \subsection{Application to splitting subgroups}
            Let $G$ and $K\subset G$ be quasireductive algebraic groups and $X=G/K$. The space $\mathbb C[X]$ of regular functions on
            $X$ is naturally a $G$-module. Recall that $K$ is called {\it splitting} in $G$ if the trivial submodule $\mathbb C\subset\mathbb C[X]$
            splits as a direct summand in $\mathbb C[X]$.  We refer the reader to \cite{serganova-sherman} for detailed explanation of importance
            of splitting subgroups. We summarize some properties of splitting subgroups in the following 
            \begin{prop}\label{splitting} Let $K\subset G$ be quasireductive algebraic groups and $\operatorname{Rep}G$, $\operatorname{Rep}K$
              denote the categories of representations of $G$ and $K$ respectively.

              (a) $K$ is splitting in $G$ iff for any $G$-modules $M$ and $M'$ the restriction map
              $$\operatorname{Ext}^i_G(M,M')\to \operatorname{Ext}^i_K(M,M')$$
              is injective for all $i$.

              (b)  Let $K$ be splitting in $G$ and $M$ be a $G$-module. If $\operatorname{Res}_KM$ is projective in $\operatorname{Rep}K$ then
              $M$ is projective in $\operatorname{Rep}G$.

              (c) If $H\subset K\subset G$, $H$ is splitting in $K$ and $K$ is splitting in $G$ then $H$ is splitting in $G$. 
            \end{prop}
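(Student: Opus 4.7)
The plan is to first establish part (a), which is the technical heart of the proposition, and then deduce parts (b) and (c) as formal consequences. The key input for (a) is Frobenius reciprocity (Eckmann--Shapiro): since $\operatorname{Ind}^G_K$ is right adjoint to the exact restriction functor, it preserves injectives, and for any $G$-modules $M,M'$ one obtains a natural isomorphism
$$\operatorname{Ext}^i_K(M,M') \cong \operatorname{Ext}^i_G(M,\operatorname{Ind}^G_K M')$$
under which the restriction map corresponds to the map induced by the unit $\eta_{M'}\colon M'\to \operatorname{Ind}^G_K M'$ of the adjunction. Thus (a) reduces to showing that $K$ is splitting in $G$ if and only if $\eta_{M'}$ is split injective as a $G$-module map for every $M'$. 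The ``only if'' direction uses the projection formula $\operatorname{Ind}^G_K(\operatorname{Res}^G_K M') \cong M'\otimes \operatorname{Ind}^G_K\mathbb C$, under which $\eta_{M'}$ identifies with $\mathrm{id}_{M'}\otimes \eta_{\mathbb C}$; tensoring any $G$-equivariant retraction of $\eta_{\mathbb C}$ by $M'$ yields the required retraction of $\eta_{M'}$. The ``if'' direction is immediate on taking $M'=\mathbb C$ and $i=0$.

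With (a) in hand, part (b) follows from the cohomological characterization of projectivity: a $G$-module $M$ is projective iff $\operatorname{Ext}^i_G(M,N)=0$ for all $i>0$ and all $N\in\operatorname{Rep}G$. By (a), $\operatorname{Ext}^i_G(M,N)$ injects into $\operatorname{Ext}^i_K(\operatorname{Res}M,\operatorname{Res}N)$, and the latter vanishes for $i>0$ whenever $\operatorname{Res}M$ is projective in $\operatorname{Rep}K$. For (c), the restriction map $\operatorname{Ext}^i_G(M,M')\to\operatorname{Ext}^i_H(M,M')$ factors through $\operatorname{Ext}^i_K(M,M')$, and both factors are injective by (a) applied respectively to $K\subset G$ and to $H\subset K$. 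The composite is therefore injective, and (a) in the reverse direction shows $H$ is splitting in $G$.

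The main obstacle I anticipate is justifying the Eckmann--Shapiro isomorphism in the quasireductive supergroup setting, which ultimately rests on the exactness of $\operatorname{Ind}^G_K$ (equivalently, on $G/K$ being affine, which holds here since $K$ is quasireductive), so that Ind sends injective resolutions to injective resolutions. Once this is established the remainder is purely formal homological algebra, and I would cite \cite{serganova-sherman} for the relevant foundational facts about induction and splitting subgroups.
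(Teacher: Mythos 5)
The paper does not contain a proof of this proposition---it is presented as a summary of facts from \cite{serganova-sherman}---so there is no in-paper argument to compare against, and your proposal must be judged on its own. Your approach via Eckmann--Shapiro and the projection formula is the standard one and the overall outline is sound, but there is a gap in the logical chain for part (a). You establish: splitting $\Rightarrow$ $\eta_{M'}$ split for all $M'$ (projection formula) $\Rightarrow$ Ext restriction injective (compose with the retraction on Ext), and separately that ``$\eta_{M'}$ split for all $M'$'' $\Rightarrow$ splitting (take $M'=\mathbb C$). But the implication actually needed to close the biconditional---Ext restriction injective $\Rightarrow$ $K$ splitting---is not addressed: your remark about ``taking $M'=\mathbb C$ and $i=0$'' gives nothing, since restriction on $\operatorname{Hom}$ is injective for trivial reasons. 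The missing step is the standard long-exact-sequence observation: the class of $0\to\mathbb C\to\operatorname{Ind}^G_K\mathbb C\to Q\to 0$ in $\operatorname{Ext}^1_G(Q,\mathbb C)$ lies in the kernel of $(\eta_{\mathbb C})_*$ (equivalently, restricts to zero in $\operatorname{Ext}^1_K(Q,\mathbb C)$, because over $K$ the counit $\epsilon$ splits the sequence by the triangle identity), so injectivity of the restriction at $i=1$, $M=Q$, $M'=\mathbb C$ forces the class to vanish---which is precisely the splitting condition. With this inserted, parts (b) and (c) follow formally as you write them, and your caveat about exactness of $\operatorname{Ind}^G_K$ is handled exactly as you anticipate: $G/K$ is affine for quasireductive $K\subset G$ (the paper notes this), so $\operatorname{Ind}^G_K$ is exact, and being right adjoint to the exact $\operatorname{Res}$ it preserves injectives.
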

            Now we relate the property of being splitting with CS volume of $X$.
            \begin{prop}\label{volume_splitting}   Assume that
              $\uX_{\mathbb R}=\uG_{\mathbb R}/(\uK\cap\uG_{\mathbb R})$ is a totally real submanifold of $\uX$ and that $X$ admits a $G$-invariant
              volume form $\omega$.
              If CS volume of $X$ is not zero then $K$ is splitting in $G$.
            \end{prop}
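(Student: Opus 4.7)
The plan is to use the integration functional $I : \mathbb C[X] \to \mathbb C$ as an explicit $G$-equivariant retraction of the constants $\mathbb C \hookrightarrow \mathbb C[X]$. By Frobenius reciprocity, $\mathbb C[X] = \mathbb C[G/K] \cong \operatorname{Ind}^G_K \mathbb C$, so unwinding the definition, splitness of $K$ in $G$ is precisely the statement that the inclusion of the trivial constants into $\mathbb C[X]$ admits a $G$-equivariant retraction.

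First, I would invoke Theorem \ref{invariance}, which already establishes that $I(f) = \int_X f\omega$ is $G$-equivariant as a map of $G$-modules (where $\mathbb C$ is the trivial module). Next, the hypothesis says $I(1) \neq 0$, so after rescaling $\omega$ by $1/I(1)$ we may assume $I(1) = 1$. The composition $\mathbb C \hookrightarrow \mathbb C[X] \xrightarrow{I} \mathbb C$ is then the identity on $\mathbb C$, so $I$ is a retraction of the inclusion.

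This immediately gives the $G$-module decomposition
\[
\mathbb C[X] = \mathbb C \oplus \ker(I),
\]
exhibiting the trivial submodule as a direct summand, which is by definition the splitting property for $K \subset G$.

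There is essentially no obstacle beyond checking that the CS integral behaves formally like an ordinary integral for the purposes of this argument, which is ensured by the setup in Section~\ref{sec:grassmannians} (in particular, the totally real hypothesis guarantees $X$ is genuinely a CS manifold on which Berezinian integration is defined, and Theorem~\ref{invariance} has already done the work of verifying $G$-equivariance from $\mathfrak g$-invariance of $\omega$). The only subtle point worth noting is that $\mathbb C[X]$ is infinite-dimensional, but this does not affect the splitting argument: the existence of a $G$-equivariant linear retraction onto a subobject automatically produces the direct sum decomposition regardless of dimension.
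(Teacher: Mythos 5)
Your proposal is correct and matches the paper's own proof, which likewise observes that Theorem \ref{invariance} makes $I$ a $G$-equivariant functional with the composition $\mathbb C\to\mathbb C[X]\xrightarrow{I}\mathbb C$ nonzero, hence a retraction exhibiting the splitting. You have simply spelled out the rescaling and the resulting direct-sum decomposition, which the paper leaves implicit.
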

            \begin{proof} Follows immediately from Theorem \ref{invariance} since the composition
              $$\mathbb C\to\mathbb C[X]\xrightarrow{I}\mathbb C$$
            is not zero.
          \end{proof}

          \begin{thm}\label{defect} Let $\mathfrak g=\operatorname{Lie} G$ be a Kac-Moody Lie superalgebra and $\mathfrak d$ be
            its defect subalgebra. Then the corresponding defect subgroup $D$
            is splitting in $G$.
          \end{thm}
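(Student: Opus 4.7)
The plan is to apply the splitting transitivity property (Proposition \ref{splitting}(c)) to a two-step tower $D\subset K\subset G$, with $K$ the subgroup introduced in the previous subsection (i.e., the connected group whose Lie algebra is the centralizer of $\mathfrak d_0$). By Theorem \ref{main_hom},
$$\int_X\omega=\frac{|W_d|}{|W_c|}\left(\frac{2\pi}{\mathbf i}\right)^{\dim_{\mathbb C}\uX},$$
which is manifestly nonzero, and Proposition \ref{volume_splitting} then yields that $K$ is splitting in $G$. It therefore remains to establish that $D$ is splitting in $K$, since the theorem will then follow from Proposition \ref{splitting}(c) applied to the tower $D\subset K\subset G$.

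For the latter step I would exploit the decomposition $\mathfrak k=\mathfrak c\oplus\tilde{\mathfrak d}$ recorded in the previous subsection. At the group level this realises $K$ as an almost-direct product $C\cdot\tilde D$, with $\tilde D\cong GL(1|1)^d$ containing $D\cong SL(1|1)^d$ as the Berezinian kernel on each factor. Since $\mathfrak c$ has defect zero, $\operatorname{Rep}C$ is semisimple, so any $K$-module decomposes into $C$-isotypic components. Using the Ext characterisation of splitting (Proposition \ref{splitting}(a)), the injectivity of $\operatorname{Ext}^\bullet_K(M,N)\to\operatorname{Ext}^\bullet_D(M,N)$ reduces, after taking $C$-isotypic components, to the injectivity of $\operatorname{Ext}^\bullet_{\tilde D}(V,W)\to\operatorname{Ext}^\bullet_D(V,W)$ for $\tilde D$-modules $V,W$. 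This is exactly the statement that $D$ is splitting in $\tilde D$, which, since splitting is preserved under direct products of pairs, reduces to the single-factor statement $SL(1|1)\subset GL(1|1)$.

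For that base case, $SL(1|1)$ is normal in $GL(1|1)$ with quotient the even torus $\mathbb G_m$ (via the Berezinian), and so
$$\operatorname{Ind}^{GL(1|1)}_{SL(1|1)}\mathbb C=\mathcal O(\mathbb G_m)=\bigoplus_{n\in\mathbb Z}\mathbb C_n$$
decomposes as a direct sum of $GL(1|1)$-submodules (the characters of $\mathbb G_m$ pulled back), so the trivial summand is a direct summand. The main technical obstacle I anticipate is a clean treatment of the almost-direct product $C\cdot\tilde D=K$: one must verify that the central overlap is harmless for the Ext reduction described above, and that the $C$-isotypic decomposition argument for $K$-modules really produces the expected $\tilde D$-module summands. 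None of this is conceptually difficult, but care is needed because the direct sum $\mathfrak k=\mathfrak c\oplus\tilde{\mathfrak d}$ holds at the Lie-algebra level and must be carefully integrated to the group level; the semisimplicity of $\operatorname{Rep}C$ together with the observation that the identifying center acts by scalars on each $C$-isotypic block should make the reduction go through.
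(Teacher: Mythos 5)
Your proposal is correct, and its skeleton is the paper's: take $K$ to be the connected centralizer of $\mathfrak d_0$, deduce from Theorem \ref{main_hom} (via Proposition \ref{volume_splitting}) that $K$ is splitting in $G$, prove $D$ is splitting in $K$, and conclude by transitivity (Proposition \ref{splitting}(c)). The only divergence is in the middle step, where you take a noticeably longer route than necessary. The paper's argument is a one-liner: $\mathfrak d$ is an ideal of $\mathfrak k=\mathfrak c\oplus\tilde{\mathfrak d}$ (it is $[\tilde{\mathfrak d},\tilde{\mathfrak d}]$ and is centralized by $\mathfrak c$), so $D$ is \emph{normal} in $K$; the quotient $K/D$ has Lie algebra $\mathfrak c$ plus an even torus, hence defect zero, so $\operatorname{Rep}(K/D)$ is semisimple; therefore $\operatorname{Ind}_D^K\mathbb C=\mathbb C[K/D]$ is a $K/D$-module in which the trivial submodule splits off. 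Your reduction through $C$-isotypic components to $\tilde D$, then to the single factor $SL(1|1)\subset GL(1|1)$, is sound --- and your base case is precisely the same normality-plus-semisimple-quotient argument applied to the smallest instance --- but it forces you to confront the almost-direct-product issue for $C\cdot\tilde D$ that you rightly flag as the main technical obstacle. That obstacle simply does not arise in the paper's formulation: one never needs to integrate the decomposition $\mathfrak k=\mathfrak c\oplus\tilde{\mathfrak d}$ to a product of groups, only to observe that $D$ is normal in $K$ with semisimple quotient category. If you keep your route, you should also state explicitly the lemma that splitting is stable under products of pairs (which you invoke for $SL(1|1)^d\subset GL(1|1)^d$); it follows from $\operatorname{Ind}_{H_1\times H_2}^{G_1\times G_2}\mathbb C\cong\operatorname{Ind}_{H_1}^{G_1}\mathbb C\boxtimes\operatorname{Ind}_{H_2}^{G_2}\mathbb C$, but it is not among the properties listed in Proposition \ref{splitting}.
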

          \begin{proof} Let $K$ be the connected component of the centralizer of $\mathfrak d_0$. By Theorem \ref{main_hom} $K$ is splitting in $G$.
            Note that $D$ is normal in $K$ and the category
            $\operatorname{Rep} K/D$ is semisimple. Therefore $D$ is splitting in $K$. Thus, by Proposition \ref{splitting} (c)
            $D$ is splitting in $G$.            
          \end{proof}
          \begin{rmk}\label{tilde} Note that by the same argument $\tilde D$ is splitting in $G$.
          \end{rmk}

          \begin{prop}\label{p-splitting} Let $G=P(2l)$ (respectively, $P(2l+1)$) and $K=P(2)^l$ (respectively, $P(2)^l\times P(1)$) embedded diagonally
            into $G$. Then $K$ is splitting in $G$.
          \end{prop}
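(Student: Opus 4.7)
The plan is to iteratively decompose the inclusion $K\subset G$ into a chain of single-step splittings of the form $P(2)\times P(n-2)\subset P(n)$ (with a trivially-acting product factor attached), apply Theorem \ref{periplectic} at each step, and conclude by transitivity of the splitting property, Proposition \ref{splitting}(c). For every $n\ge 3$, the data $(r,s)=(2,n-2)$ has $rs=2(n-2)$, which is always even, so by Theorem \ref{periplectic} the CS volume of $P(n)/(P(2)\times P(n-2))$ is nonzero, and Proposition \ref{volume_splitting} then gives that $P(2)\times P(n-2)$ is a splitting subgroup of $P(n)$.

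For $G=P(2l)$, I would consider the chain
\[
P(2l) \supset P(2)\times P(2l-2) \supset P(2)^{2}\times P(2l-4) \supset \cdots \supset P(2)^{l-1}\times P(2) = P(2)^{l}.
\]
Each step embeds $P(2)^{j}\times \bigl(P(2)\times P(2l-2j-2)\bigr)$ into $P(2)^{j}\times P(2l-2j)$, i.e., it is the identity on the first $P(2)^{j}$ and the splitting inclusion from the previous paragraph on the last factor. Since the $(P(2)^{j}\times P(2l-2j))$-module $\mathbb{C}[P(2l-2j)/(P(2)\times P(2l-2j-2))]$ has trivial $P(2)^{j}$-action, a $G$-splitting of the trivial submodule over the last factor pulls back to a splitting over the product, so each step is splitting. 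Iterated application of Proposition \ref{splitting}(c) yields that $P(2)^{l}$ is splitting in $P(2l)$. The case $G=P(2l+1)$ is analogous, via the chain $P(2l+1)\supset P(2)\times P(2l-1)\supset\cdots\supset P(2)^{l}\times P(1)$; here each inner inclusion $P(2)\times P(2k-1)\subset P(2k+1)$ has $rs=2(2k-1)$, which is again even, including the final step $P(2)\times P(1)\subset P(3)$ with $rs=2$.

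Given the groundwork in Theorem \ref{periplectic} and Proposition \ref{volume_splitting}, there is no serious obstacle; the only routine verifications are (i) that the iterated $\beta$-orthogonal decomposition $\mathbb{C}^{n|n} = \bigoplus \mathbb{C}^{2|2}$ (with a possible leftover $\mathbb{C}^{1|1}$ in the odd case) realizes the diagonal embedding of $P(2)^{l}$ or $P(2)^{l}\times P(1)$ in the statement, which is essentially tautological, and (ii) the product-stability of the splitting property used at each step. The latter is immediate from the homological criterion of Proposition \ref{splitting}(a) together with the Künneth decomposition of $\mathrm{Ext}$ over a product supergroup one of whose factors acts trivially.
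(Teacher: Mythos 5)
Your proof is correct and takes essentially the same approach as the paper, which says only that the claim ``easily follows by induction on $n$ using Proposition \ref{splitting}'' once the single-step splitting $P(2)\times P(n-2)\subset P(n)$ is established via Theorem \ref{periplectic} and Proposition \ref{volume_splitting}. You are simply more explicit, correctly identifying and filling the product-stability step that the paper leaves implicit (and which, incidentally, is seen most cleanly not via K\"unneth but directly from the definition: $H$ acts trivially on $\mathbb{C}[(H\times G')/(H\times K')]=\mathbb{C}[G'/K']$, so any $G'$-equivariant splitting of the trivial submodule is automatically $H\times G'$-equivariant).
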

          \begin{proof} Theorem \ref{periplectic} and Proposition \ref{volume_splitting} imply that $P(2)\times P(n-2)$ is splitting in $P(n)$.
            Now the statement easily follows by induction on $n$ using Proposition \ref{splitting}.
            \end{proof}
          
          \subsection{ Associated variety  and projectivity criterion}
          Let $\mathfrak g$ be a finite-dimensional Lie superalgebra. The self-commuting cone $\mathcal X$ of $\mathfrak g$ is defined by
          $$\mathcal X=\{u\in\mathfrak g\mid [u,u]=0\}.$$
          If $u\in\mathcal X$ and $M$ is a $\mathfrak g$-module the induced linear map $u_M:M\to M$ satisfies the condition $u_M^2=0$. We define
          $$DS_uM:=\operatorname{Ker}u_M/\operatorname{Im}u_M.$$
          In fact, $DS_u$ defines a symmetric monoidal functor from the category of $\mathfrak g$-modules to the category of vector superspaces.
          This functor has many applications in representation theory of superalgebra, see \cite{GHSS}.
          The associated variety $\mathcal X_M$ of $M$ is defined by
          $$\mathcal X_M=\{u\in \mathcal X\mid DS_uM\neq 0\}.$$
          If $M$ is finite-dimensional then $\mathcal X_M$ is a closed  subvariety of $\mathcal X$.

          Assume now that $\mathfrak g$ is the Lie superalgebra of a quasireductive algebraic supergroup $G$. We denote by $\mathcal F(\mathfrak g)$
          the category of finite-dimensional $\mathfrak g$-modules semisimple over $\mathfrak g_0$. This is a Frobenius rigid symmetric monoidal category,
          in particular, it has enough projective objects. If $G$ is connected then the category $\operatorname{Rep}^fG$ of finite-dimensional
          $G$-modules is a tensor subcategory of
          $\mathcal F(\mathfrak g)$.  If $M\in \mathcal F(\mathfrak g)$ then $M\in \operatorname{Rep}G$ if and only if all weights of
          $M$ lie in the weight lattice of $\uG$.
          If $M$ is projective in $\mathcal F(\mathfrak g)$ then $\mathcal X_M=\{0\}$, see Theorem 10.2 in \cite {GHSS}.
          The converse is proven in \cite {GHSS} for the type I superalgebras, see Theorem 10.4. We are now able to prove it for Kac-Moody superalgebras.
          \begin{thm}\label{projectivity} Let $\mathfrak g$ be a Kac-Moody superalgebra. Then $M\in\mathcal F(\mathfrak g)$ is projective if and only if
            $\mathcal X_M=\{0\}$. 
          \end{thm}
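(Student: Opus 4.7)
The ``only if'' direction is already established as Theorem 10.2 of \cite{GHSS}, recorded just above, so the content of the claim is the converse: $\mathcal{X}_M = \{0\}$ forces $M$ to be projective. The plan is to reduce this to the type I case of \cite{GHSS} (Theorem 10.4) using the splitting of the defect subgroup proved in Theorem \ref{defect}.

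First I would restrict $M$ along the inclusion $\mathfrak{d} \hookrightarrow \mathfrak{g}$ of the defect subalgebra. The self-commuting cone $\mathcal{X}(\mathfrak{d})$ sits inside $\mathcal{X}(\mathfrak{g})$, and for any $u \in \mathcal{X}(\mathfrak{d})$ the functor $DS_u$ depends only on the action of the single element $u$, so $DS_u(\operatorname{Res}_D M)$ coincides with $DS_u(M)$ as super vector spaces. Consequently
\[\mathcal{X}_{\operatorname{Res}_D M} \subseteq \mathcal{X}_M \cap \mathcal{X}(\mathfrak{d}) = \{0\}\]
by hypothesis. Since $\mathfrak{d} \cong \mathfrak{sl}(1|1)^d$ is a product of type I Lie superalgebras, Theorem 10.4 of \cite{GHSS} applies and yields that $\operatorname{Res}_D M$ is projective in $\mathcal{F}(\mathfrak{d})$.

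Now I would invoke Theorem \ref{defect}, the main group-theoretic output of this paper, which asserts that $D$ is splitting in $G$. By Proposition \ref{splitting}(b), projectivity of $\operatorname{Res}_D M$ over $D$ upgrades to projectivity of $M$ over $G$, completing the argument. The main subtlety to address is the bookkeeping between $\operatorname{Rep} G$ (where Proposition \ref{splitting} is phrased) and $\mathcal{F}(\mathfrak{g})$ (where the theorem is stated): for $M \in \mathcal{F}(\mathfrak{g})$ whose weights are not integral for $\uG$, one passes to a suitable finite cover $\tilde G$ on which the relevant weights become integral. The defect subgroup $D$ and the notion of being splitting behave well under such covers (since $D$ has only torus reduced part), so no new obstruction arises, and the argument transfers verbatim.
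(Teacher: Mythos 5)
Your reduction strategy is essentially the paper's: apply the type-I projectivity criterion of \cite{GHSS} to a defect-type subalgebra of $\mathfrak g$, then use the splitting property to transport projectivity up to $G$. There are, however, two points of divergence from the actual proof, one cosmetic and one genuine.

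The cosmetic one: the paper does not apply GHSS Theorem~10.4 to $\mathfrak d\cong\mathfrak{sl}(1|1)^d$ but to the slightly larger $\tilde{\mathfrak d}\cong\mathfrak{gl}(1|1)^d$, and correspondingly uses that $\tilde D$ is splitting (Remark~\ref{tilde}) rather than that $D$ is. Since $\tilde{\mathfrak d}_1=\mathfrak d_1$, the hypothesis $\mathcal X_M\cap\tilde{\mathfrak d}_1=\{0\}$ you extract is the same; the issue is only whether the projectivity criterion of \cite{GHSS} is stated in a form that covers the degenerate, non-simple superalgebra $\mathfrak{sl}(1|1)^d$. If you are going to cite Theorem~10.4 for $\mathfrak d$ directly you should check this; otherwise just replace $\mathfrak d$ by $\tilde{\mathfrak d}$, which is a standard quasireductive type-I algebra with a full Cartan, and everything goes through.

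The genuine gap is your treatment of general $M\in\mathcal F(\mathfrak g)$. You propose to pass to a finite cover $\tilde G$ on which the weights of $M$ become integral. This cannot work in general: objects of $\mathcal F(\mathfrak g)$ are only required to be $\mathfrak g_0$-semisimple, so their weights on the central torus of $\mathfrak g_0$ are arbitrary complex characters of $\mathfrak h$, not rational multiples of the weight lattice of $\uG$. No finite cover integralizes a weight such as $\pi\varepsilon_1$. The paper sidesteps this by a tensor trick: assuming $M$ indecomposable, the weights of $M\otimes M^*$ lie in the root lattice, hence in the weight lattice of $\uG$, so $M\otimes M^*\in\operatorname{Rep}G$ and $\mathcal X_{M\otimes M^*}=\{0\}$; the already-established case gives $M\otimes M^*$ projective, and then $M$ is a direct summand of the projective object $M\otimes M^*\otimes M$, hence projective. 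You need to replace your cover argument by this (or an equivalent) device; as written, the reduction from $\mathcal F(\mathfrak g)$ to $\operatorname{Rep}G$ does not close.
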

          \begin{proof} We only have to show that if $\mathcal X_M=\{0\}$ then $M$ is projective. Let us assume first that $M\in \operatorname{Rep} G$.
            We have $\mathcal X_M\cap\tilde{\mathfrak d}_1=\{0\}$. By Theorem 10.4 in \cite{GHSS} $M$ is a projective $\tilde{\mathfrak d}$-module.
            Since $\tilde D$ is splitting in $G$, see Remark \ref{tilde}, $M$ is a projective $G$-module by Proposition \ref{splitting}(b).

            Now let us show the same for any $M\in\mathcal F(\mathfrak g)$. Without loss of generality we may assume that $M$ is
            indecomposable. Then weights of
            $M\otimes M^*$ lie in the root lattice and hence in the weight lattice of $\uG$. Therefore $M\otimes M^*\in \operatorname{Rep}G$.
            On the other hand, $\mathcal X_{M\otimes M^*}=\{0\}$. Therefore $M\otimes M^*$ is projective. But then $M$ is projective because
            $M$ is a direct summand of $M\otimes M^*\otimes M$.
            
            \end{proof}

\end{document}